\theoremstyle{plain}
\newtheorem{Theorem}{Theorem}
\newtheorem{theorem}[Theorem]{Theorem}
\newtheorem{proposition}[Theorem]{Proposition}
\newtheorem{lemma}[Theorem]{Lemma}
\theoremstyle{definition}
\newtheorem{Definition}[Theorem]{Definition}
\newtheorem{example}[Theorem]{Example}
\newtheorem{definition}[Theorem]{Definition}
\newtheorem{remark}[Theorem]{Remark}
\theoremstyle{remark}
\newcommand{\N}{\mathbb{N}}     %natural numbers
\newcommand{\R}{\mathbb{R}}     %real numbers
\def\r{\R}
\def\nuplu{\nu^{+}}       %postitive variation of deficient  t.m nu
\def\numin{\nu^{-}}         %negative variation of deficient  t.m nu
\def\laplu{\la^{+}}  
\def\lamin{\la^{-}}  
\newcommand{\calA}{\mathscr{A}}
\newcommand{\calC}{\mathscr{C}}
\newcommand{\calK}{\mathscr{K}}
\newcommand{\calO}{\mathscr{O}}
\newcommand{\calP}{\mathscr{P}}
\DeclareMathOperator{\cl}{cl}				% closure
\newcommand{\TM}{{TM}} % the space of all topological measures on X
\newcommand{\M}{{M}} % the space of all measures on X
\newcommand{\DTM}{{DTM}} % the space of all deficient topological measures on X
\newcommand{\ox}{\calO(X)}
\newcommand{\cx}{\calC(X)}
\newcommand{\kx}{\calK(X)}
\newcommand{\kf}{\calK(F)}
\newcommand{\of}{\calO(F)}
\newcommand{\kv}{\calK(V)}
\newcommand{\ov}{\calO(V)}
\newcommand{\cv}{\calC(V)}
\newcommand{\ax}{\calA(X)}
\newcommand{\bccx}{\calK_{c}(X)}
\newcommand{\bcx}{\kx}
\newcommand{\bcsx}{\calK_{s}(X)}
\newcommand{\bcox}{\calK_{0}(X)}
\def\E{{{\mathcal{E}}}}
\def\D{{{\mathcal{D}}}}
\def\b{{{\mathcal{B}}}}
\renewcommand{\O}{\emptyset}
\def\D{\mathcal{D}}
\def\m{{{\mathcal{M}}}}
\def\sm{\setminus}
\def\cl{\overline}
\def\se{\subseteq}
\def\sc{\sqcup}
\def\bsc{\bigsqcup}
\def\eps{\epsilon}
\def\la1{\lambda_1}
\def\la2{\lambda_2}
\def\la0{\lambda_{0}}
\def\la{\lambda}
\begin{document}
\title{Deficient topological measures on locally compact spaces }
\author{S. V. Butler, University of California Santa Barbara} 
%\date{December 29, 2018}
\subjclass[2010]{Primary 28C15; Secondary 28C99}
\keywords{deficient topological measure, topological measure, positive variation, superadditivity}

\begin{abstract}
Topological measures and quasi-linear functionals generalize measures and linear functionals.
We define and study deficient topological measures on locally compact spaces. 
A deficient topological measure on a locally compact space is a set function on open and closed subsets which is finitely additive 
on compact sets, inner regular on open sets, and outer regular on closed sets. 
Deficient topological measures generalize measures and topological measures.
First we investigate positive, negative, and total variation of 
a signed set function that is only assumed to be  finitely additive on compact sets.  
These positive, negative, and total variations turn out  to be deficient topological measures.
Then we examine finite additivity, superadditivity,
smoothness, and other properties of deficient topological measures. We obtain methods for generating new 
deficient topological measures. We provide necessary and sufficient conditions 
for a deficient topological measure to be a topological measure and to be a measure. 
The results presented are necessary for further study of topological measures, deficient topological measures,
and corresponding non-linear functionals on locally compact spaces.   
\end{abstract}

\maketitle

\section{Introduction}

This paper is one in a series by the author devoted to the study of quasi-linear functionals 
and topological measures on locally compact spaces.

Topological measures (initially called quasi-measures) were introduced by J. F. Aarnes  in \cite{Aarnes:TheFirstPaper}.
These generalizations of measures and corresponding generalizations of linear functionals are connected to the problem 
of linearity of the expectational functional on the algebra of observables in quantum mechanics. 
Despite the fact that topological measures lack many features of measures, such as algebraic structure of the domain, subadditivity, etc., 
many results typical for measures still hold for topological measures.
There are many papers now devoted to topological measures and quasi-linear functionals. 
In the last 12 years numerous applications of quasi-linear functionals and topological measures to symplectic topology have been found. 
In fact, the first paper \cite{EntovPolterovich} on the connections between these two fields has been cited over 100 times, 
and was followed by many articles and a chapter in a monograph \cite{PoltRosenBook}.
Deficient topological measures were first defined and used by A. Rustad and O. Johansen in \cite{OrjanAlf:CostrPropQlf}. 
They were later independently rediscovered  by M. Svistula in \cite{Svistula:Signed}  
(where they were called  "functions of class $\Psi$") and further developed in \cite{Svistula:DTM}.  
In all previous works, deficient topological measures were defined as real-valued functions on a compact space. In this paper
we study deficient topological measures on locally compact spaces as functions into extended real numbers. 

On locally compact spaces deficient topological measures are set functions defined on open and closed subsets of a space, 
that are finitely additive on compact sets, inner compact regular on open sets, and outer regular for closed sets. 
Thus, they generalize topological measures and measures. 
It is interesting that many properties of topological measures and even measures still hold for 
deficient topological measures. 
Many results for topological measures and deficient topological measures on compact spaces remain 
valid for deficient topological measures 
on locally compact spaces. This is remarkable, since in the compact setting  we work with
open sets and closed sets, and use the fact that they are complements of each other; while in the locally compact setting 
the main sets used are compact sets and open sets. Finally, results transfer nicely from real-valued functions in a compact
setting to functions into extended reals in a locally compact setting. In this paper we generalize   
for locally compact spaces many existing results 
and obtain some new ones. The results of this paper are necessary for further study of topological measures 
and corresponding non-linear functionals on locally compact spaces.  

The paper is organized as follows. In Section \ref{PNTvari} we define the positive, negative, and total variation of 
a signed set function that is finitely additive on compact sets. We then study various properties of positive, 
negative, and total variation.
In Section \ref{snDTM} we define a deficient topological measure on a locally compact space. The positive variation is the unique
smallest deficient topological measure that on compact sets is larger than its defining signed set function. 
We study finite additivity, superadditivity,
smoothness, and other properties of deficient topological measures. 
The main focus of Section \ref{MTMDTM} 
is finding conditions under which a deficient topological measure is a topological measure or a measure. 
In Section \ref{SnewDTM} 
we discuss obtaining new deficient topological measures
by restricting and extending, and also by using continuous functions. 
Section \ref{Examples} is devoted to examples.
 
In this paper $X$ is a locally compact, connected space.
By a component of a set we always mean a connected component. We denote by $\cl E$ the closure of a set $E$.
A set $A \subseteq X$ is called \emph{bounded} if $\cl A$ is compact. 
A set $A$ is called \emph{solid} if $A$ is  connected, and $X \sm A$ has only unbounded connected components.
We denote by $ \bsc$ a union of disjoint sets.
When we consider set functions into extended real numbers, we consider functions that are not identically $\infty$ or $-\infty$. 

Several collections of sets will be used often.   They include:
$\ox$,  the collection of open subsets of   $X $;
$\cx$  the collection of closed subsets of   $X $;
$\kx$  the collection of compact subsets of   $X $;
$ \ax = \cx \cup \ox$. By $\bcox$ we denote the collection of finite unions of disjoint compact connected sets, 
by $\bccx $ the collection of compact connected sets, and by $ \bcsx$ the collection of compact solid sets.
$\calP (X)$ is the power set of $X$.

\begin{definition} \label{MDe2}
Let $X$ be a  topological space and $\mu$ be a nonnegative set function on $\E$, a family of subsets of $X$ that contains $\ox \cup \cx$. 
We say that 
\begin{itemize}
\item
$\mu$ is inner regular (or inner compact regular)  at $A \in \E$
if $\mu(A) = \sup \{ \mu(C) : \ \ C \se A, \ \ C \in  \kx \}$
\item
$\mu$ is outer regular at $A \in \E$ if 
$\mu(A) = \inf \{ \mu(U) : \ \ A \se U, \ \ U \in  \ox \}$ 
%for every $A \in \calA$
\item
$\mu$ is inner regular
if $\mu$ is inner regular at $A$ for every $A \in \E$
\item
$\mu$ is outer regular if $\mu$ is outer regular  at $A$ for every $A \in \E$
\item
$\mu$ is regular if $\mu$ is both inner and outer regular
\item
$\mu$ is compact-finite if $\mu(K) < \infty$ for any $ K \in \kx$.
\item
$\mu$ is $\tau-$ smooth on compact sets if 
$K_\alpha \searrow K, K_\alpha, K \in \kx$ implies $\mu(K_\alpha) \rightarrow \mu(K)$.
\item
$\mu$ is $\tau-$ smooth on open sets if 
$U_\alpha \nearrow U, U_\alpha, U \in \ox$ implies $\mu(U_\alpha) \rightarrow \mu(U)$.
\item
$ \mu$ is simple if it only assumes values $0$ and $1$.
\end{itemize}
% In Bogahev's book strong $\tau-$ smooth  is called $\tau-$ smooth. Which one should I use?  
\end{definition}

We consider set functions that are not identically $ \infty$ or $- \infty$.

\begin{definition}
A Radon measure  $m$  on $X$ is  a Borel measure that is compact-finite, 
outer regular on all Borel sets, and inner regular on all open sets, i.e.
for every Borel set $E$
\[ m(E) = \inf \{ m(U): \ \ E \se U, \ U \text{  open  } \}, \]
and for every open set $U$ 
\[ m(U) = \sup \{  m(K): \ \ K \se U, \  K  \text{  compact  } \}. \]
\end{definition}

\begin{remark} \label{netsSETS}
Here is a useful observation which follows, for example, from Corollary 3.1.5 in \cite{Engelking}. 
\begin{itemize}
\item[(i)]
If $K_\alpha \searrow K, K \se U,$ where $U \in \ox,\  K, K_\alpha \in \cx$, and $K$ and at least one of $K_\alpha$ are compact, 
then there exists $\alpha_0$ such that
$ K_\alpha \se U$ for all $\alpha \ge \alpha_0$.
\item[(ii)]
If $U_\alpha \nearrow U, K \se U,$ where $K \in \kx, \ U, \ U_\alpha \in \ox$ then there exists $\alpha_0$ such that
$ K \se U_\alpha$ for all $\alpha \ge \alpha_0$.
\end{itemize}
\end{remark}

The following lemma can be found, for example, in \cite{Halmos} (see Chapter X, par. 50, Theorem A).
\begin{lemma} \label{HalmEzLe}
Let $X$ be locally compact.
If $ C \se U \cup V$, where $C$ is compact and $U, V$ are open, then there exist compact sets 
$K$ and $D$ such that $C = K \cup  D, \ K \se U, \ \ D \se V$.
\end{lemma}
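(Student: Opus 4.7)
The plan is to use local compactness and compactness of $C$ to cover $C$ by finitely many sets with compact closures, each of which lies inside either $U$ or $V$, and then split the cover accordingly.

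First, for each point $x \in C$, I observe that $x$ lies in $U$ or in $V$. Using local compactness of $X$ (and the implicit Hausdorff assumption in the setting of the paper), for each such $x$ I can pick an open neighborhood $W_x$ of $x$ whose closure $\overline{W_x}$ is compact and is contained entirely in $U$ (if $x \in U$) or entirely in $V$ (if $x \in V$). This uses the standard fact that in a locally compact Hausdorff space, every point of an open set $U$ has an open neighborhood with compact closure contained in $U$.

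Next, the family $\{W_x : x \in C\}$ is an open cover of the compact set $C$, so I extract a finite subcover $W_{x_1}, \dots, W_{x_n}$. I partition the indices into $I = \{i : \overline{W_{x_i}} \subseteq U\}$ and $J = \{1,\dots,n\} \setminus I$, so that for $j \in J$ we have $\overline{W_{x_j}} \subseteq V$ (choosing one side when both inclusions happen to hold). Then I set
\[
K = C \cap \bigcup_{i \in I} \overline{W_{x_i}}, \qquad D = C \cap \bigcup_{j \in J} \overline{W_{x_j}}.
\]
Both $K$ and $D$ are closed subsets of the compact set $C$, hence compact. By construction $K \subseteq U$ and $D \subseteq V$, and since every $x \in C$ lies in some $W_{x_k} \subseteq \overline{W_{x_k}}$, the union $K \cup D$ equals $C$.

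There is no real obstacle here; the proof is essentially a bookkeeping exercise once the key step — choosing, at each point of $C$, a neighborhood with compact closure sitting inside $U$ or $V$ — is noted. The only place where care is needed is the appeal to local compactness to guarantee such neighborhoods exist inside the prescribed open set, which is a standard property in the locally compact (Hausdorff) setting that the paper is working in.
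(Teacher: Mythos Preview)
Your argument is correct. The paper itself does not supply a proof of this lemma; it merely cites Halmos (\emph{Measure Theory}, Chapter~X, \S50, Theorem~A), so there is no in-paper proof to compare your approach against.
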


We would like to note the following fact. (See, for example, \cite{Dugundji}, Chapter XI, 6.2)

\begin{lemma} \label{easyLeLC}
Let $K \subseteq U, \ K \in \bcx,  \ U \in \ox$ in a locally compact space $X$.
Then there exists a bounded open sets $V$ such that 
$$ K \se V \se \cl V \se U. $$ 
\end{lemma}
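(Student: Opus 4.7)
The plan is to produce the desired $V$ pointwise by local compactness and then glue finitely many such neighborhoods using compactness of $K$. First, for each $x \in K$, I would use local compactness to pick an open neighborhood $W_x$ of $x$ whose closure $\overline{W_x}$ is compact. A priori $\overline{W_x}$ need not lie inside $U$, so I would shrink as follows: the set $F_x := \overline{W_x} \setminus U$ is compact and does not contain $x$, so Hausdorffness (implicit in our ambient locally compact setting) yields disjoint open sets $A_x \ni x$ and $B_x \supseteq F_x$. Setting $V_x := W_x \cap A_x$ gives an open neighborhood of $x$ whose closure is a closed subset of the compact set $\overline{W_x}$, hence compact, and which is disjoint from $B_x$; therefore $\overline{V_x} \subseteq \overline{W_x} \setminus B_x \subseteq U$.

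Next, I would invoke compactness of $K$: the family $\{V_x : x \in K\}$ is an open cover of $K$, so I extract a finite subcover $V_{x_1}, \ldots, V_{x_n}$ and set $V := \bigcup_{i=1}^n V_{x_i}$. Then $V$ is open, $K \subseteq V$, and $\overline{V} = \bigcup_{i=1}^n \overline{V_{x_i}}$ is a finite union of compact sets, hence compact and contained in $U$. In particular $\overline{V}$ is compact, so $V$ is bounded, and $K \subseteq V \subseteq \overline{V} \subseteq U$, as required.

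I do not foresee any serious obstacle; the lemma is a standard fact about locally compact Hausdorff spaces, as is clear from the authors' reference to Dugundji. The one point requiring a little care is arranging $\overline{W_x} \subseteq U$ without losing compactness of the closure, which the Hausdorff separation argument above handles in one step.
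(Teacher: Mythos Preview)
Your argument is correct; the paper does not actually prove this lemma but simply cites it as a standard fact (Dugundji, Chapter~XI, 6.2), and what you have written is precisely the usual textbook proof one finds there. There is nothing to compare.
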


We will also need the following two results (see, for example, section 2 in \cite{Butler:TMLCconstr}).

\begin{lemma} \label{LeConLC}
Let $K \subseteq U, \ K \in \bcx,  \ U \in \ox$ in a locally compact, locally connected space $X$.
If either $K$ or $U$ is connected there exist a bounded open connected set  $V$  and a compact connected set $C$
such that 
$$ K \se V \se C \se U. $$ 
One may take $C = \cl V$.
\end{lemma}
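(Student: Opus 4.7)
The plan is to apply Lemma \ref{easyLeLC} to sandwich $K$ between a bounded open set and a compact set inside $U$, and then to use local connectedness to extract a connected piece. The two hypotheses on $K$ and $U$ are handled separately.

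If $K$ is connected, Lemma \ref{easyLeLC} yields a bounded open $W$ with $K \se W \se \cl W \se U$. Local connectedness of $X$ makes the components of $W$ open, and the connected set $K$ must lie inside a single component $V$. Then $V$ is open, connected, and bounded (as a subset of $W$), and $C := \cl V \se \cl W \se U$ is compact and connected.

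If instead $U$ is connected, I would first build an open bounded neighborhood of $K$ with closure in $U$ out of connected pieces. For each $x \in K$ pick (using local compactness and local connectedness) a connected open $V_x \ni x$ with $\cl V_x$ compact and contained in $U$; a finite subcover $V_1,\ldots,V_n$ of $K$ gives $W := V_1 \cup \cdots \cup V_n$, which is open, bounded, contains $K$, and satisfies $\cl W \se U$, but need not be connected. Let $W_1,\ldots,W_m$ denote the components of $W$ and pick $p_i \in W_i$. The key step is to join these components within $U$, for which I plan to invoke the following fact applied to $U$ (which is itself connected, locally connected, and locally compact): any two points of such a space lie in a common compact connected subset. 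This can be proved by checking that the relation ``both points belong to some common compact connected subset'' is an equivalence relation with open classes (openness uses local connectedness together with local compactness), so connectedness forces a single class.

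Granting this, for each $i \ge 2$ pick a compact connected $L_i \se U$ containing $p_1$ and $p_i$, cover $L_i$ by finitely many connected open sets with compact closures in $U$ while retaining only those that meet $L_i$, and let $M_i$ be their union; then $L_i \se M_i$ and $M_i$ is a connected open set with compact closure in $U$. Taking $V := W \cup M_2 \cup \cdots \cup M_m$ produces an open bounded set containing $K$ with compact closure in $U$, connected because each $M_i$ meets both $W_1$ and $W_i$ via $L_i$; the choice $C := \cl V$ completes the argument. The main obstacle is the component-connecting step in the second case, whose geometric content is precisely the ``common compact connected subset'' fact.
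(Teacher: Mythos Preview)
The paper does not supply its own proof of this lemma; it merely cites \cite{Butler:TMLCconstr}. Your argument is correct and self-contained: the connected-$K$ case is immediate from local connectedness applied to the bounded open set produced by Lemma~\ref{easyLeLC}, and in the connected-$U$ case your device of showing that ``two points lie in a common compact connected subset of $U$'' is an equivalence relation with open classes (hence a single class, by connectedness of $U$) is a standard and efficient way to link the finitely many components of the preliminary cover into one connected bounded open set with compact closure in $U$.
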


\begin{lemma} \label{LeCCoU}
Let $X$ be a locally compact and locally connected space. Suppose $K \se U, \ K \in \bcx, \ U \in \ox$. 
Then there exists $C \in \bcox$ such that $ K \se C \se U$.
\end{lemma}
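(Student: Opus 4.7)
The plan is to reduce the problem to a covering argument using the previous lemma, and then to cluster the resulting compact connected sets into their overlap components. First, for each $x \in K$, apply Lemma \ref{LeConLC} to the pair $\{x\} \se U$ (note that $\{x\}$ is compact and connected): this produces a bounded open connected set $V_x$ and a compact connected set $C_x = \overline{V_x}$ with $\{x\} \se V_x \se C_x \se U$. Since $\{V_x : x \in K\}$ is an open cover of the compact set $K$, there is a finite subcover $V_{x_1}, \ldots, V_{x_n}$, and the set
\[ D = C_{x_1} \cup \cdots \cup C_{x_n} \]
is a compact subset of $U$ containing $K$. The remaining task is to rewrite $D$ as a finite disjoint union of compact connected sets.

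To accomplish this, define the equivalence relation $\sim$ on $\{1,\ldots,n\}$ generated by $i \sim j$ whenever $C_{x_i} \cap C_{x_j} \neq \emptyset$, and let $I_1, \ldots, I_m$ be the resulting equivalence classes. For each $k$, set
\[ D_k = \bigcup_{i \in I_k} C_{x_i}. \]
Each $D_k$ is compact as a finite union of compacts. If $k \neq l$ and $D_k \cap D_l \neq \emptyset$, then some $C_{x_i}$ with $i \in I_k$ meets some $C_{x_j}$ with $j \in I_l$, forcing $i \sim j$ and thus $I_k = I_l$, a contradiction; hence the $D_k$ are pairwise disjoint. To see that each $D_k$ is connected, fix $i_0 \in I_k$; for any $j \in I_k$ there is, by construction of $\sim$, a chain $i_0, i_1, \ldots, i_p = j$ within $I_k$ with consecutive $C_{x_{i_\ell}}$ intersecting, so the union $C_{x_{i_0}} \cup \cdots \cup C_{x_{i_p}}$ is connected (by the standard fact that a chain of connected sets with nonempty consecutive intersections has connected union). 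Thus every $C_{x_j}$ with $j \in I_k$ lies in a connected subset of $D_k$ containing $C_{x_{i_0}}$, which forces $D_k$ to be connected. Setting $C = D_1 \sqcup \cdots \sqcup D_m$ yields the desired element of $\bcox$ with $K \se C \se U$.

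The only genuine difficulty is arranging the disjointness in the final family, since a finite cover by compact connected sets need not be disjoint; the intersection-graph/equivalence-class argument is the natural and essentially forced way to overcome this. The local connectedness of $X$ enters precisely through Lemma \ref{LeConLC}, which supplies connected members of the cover in the first place, while local compactness ensures the $C_x$ can be taken compact inside $U$.
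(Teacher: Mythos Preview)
Your argument is correct: covering $K$ by connected compact neighborhoods via Lemma~\ref{LeConLC}, extracting a finite subcover, and then merging the resulting compact connected sets along their intersection graph yields the required finite disjoint union in $\bcox$.

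Note, however, that the paper does not actually prove this lemma; it is quoted from \cite{Butler:TMLCconstr}, so there is no in-paper proof to compare against. A slightly more direct route, which you may find in that reference, avoids the equivalence-class clustering altogether: since $X$ is locally connected, the connected components $U_1,\ldots,U_m$ of $U$ meeting $K$ are open (and there are finitely many by compactness of $K$), each $K\cap U_i$ is compact because $U_i$ is clopen in $U$, and one may apply Lemma~\ref{LeConLC} once per component (using that $U_i$ is connected) to get compact connected $C_i$ with $K\cap U_i\subseteq C_i\subseteq U_i$; then $C=\bigsqcup_{i=1}^m C_i$ is automatically disjoint. Your approach trades this structural use of components for a more hands-on covering argument; both are valid, and yours has the minor advantage of not needing to observe that components of $U$ are clopen.
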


\section{Positive, negative and total variation for a signed set function} \label{PNTvari}

\begin{definition} \label{laplu}
Given signed set function $\la: \kx  \longrightarrow [-\infty, \infty] $ which assumes at most one of $ \infty, -\infty$
we define two set functions on $\ox \cup \cx$, 
the positive variation $\laplu$ and the total variation $| \la|$,  
as follows:  \\
for an open subset $U \se X$ let 
\begin{eqnarray} \label{modla open set}
\laplu(U) = \sup \{\la(K): \  K \se U,  K \in \kx \}; 
\end{eqnarray}
\begin{eqnarray} \label{modnu open set}
|\la| (U) = \sup \{ \sum_{i=1}^n |\la(K_i)| : \  \bsc_{i=1}^n K_i \se U, \ K_i \se \kx,  \, n \in \N \}; 
\end{eqnarray}
and for a closed subset $F \se X$ let 
\begin{eqnarray} \label{modla closed set}
\laplu(F)  = \inf\{ \laplu (U) : \ F \se U, \ U \in \ox\};
\end{eqnarray}
\begin{eqnarray} \label{modnu closed set}
|\la| (F)  = \inf\{ |\la| (U) : \ F \se U, \ U \in \ox\}. 
\end{eqnarray} 
We define the negative variation $\lamin$ associated with a signed set function $\la$  
as a set function $\lamin = (- \la)^{+}$.  
\end{definition}

%Note that $\laplu, \lamin$ may assume value $\infty$.

\begin{remark} \label {absla}
Let $\la, \nu$ be two signed set functions as in Definition \ref{laplu}.
We see that $| \la | = | - \la|$,
$ \laplu(U) \le  |\la| (U) $, and $  \lamin(U) \le  |\la| (U) $ for any open set $U$. Thus, $  \laplu \le  |\la|  $ and 
$ \lamin \le  |\la|  $. 

For any open set $U$ we have $| \la \pm \nu | (U)  \le |\la| (U)  + | \nu| (U)$. It follows that 
$| \la \pm \nu | \le  |\la| + | \nu|$. Similarly, $ | b \la| = | b|  | \la|$ for any real number $b$. 

If $\la \le \nu$ then $\laplu \le \nuplu$.  
\end{remark} 

\begin{lemma} \label{lapluBasic}
Let $X$ be locally compact.
Let $\la: \kx \rightarrow  [-\infty, \infty] $ which assumes at most one of $ \infty, -\infty$
be a signed set function which is finitely additive on compact sets. 
% and such that  $| \la(\O) | < \infty$.  
Then
\begin{enumerate}[label=(s\arabic*),ref=(s\arabic*)]
\item \label{emptset}
$\la(\O) = \laplu(\O) = \lamin(\O) = |\la| (\O) = 0.$ Also, $\laplu \ge 0$ and $| \la| \ge 0$.
\item \label{s2}
For an open set $U$, if $\laplu(U) = 0$ then $|\la |(U) =  \lamin(U)$, and 
if $\lamin(U) = 0$ then $|\la |(U) =  \laplu(U)$.
\item \label{laplmon}
$\laplu$ and $| \la|$ are monotone, i.e. if $A \se B, \ A,B \in \ox\cup \cx$ then 
$\laplu(A) \le \laplu(B)$ and $|\la|(A) \le |\la| (B)$.
%Also, $| \la(U) |  \le | \la| (U) $ for any open $U$.
\item \label{lapluad1}
$\laplu$ and $|\la|$ are finitely additive on open sets. 
\item \label{s5}
$\la(K) \le \laplu(K) $ and $\la(K) \le | \la(K)| \le | \la| (K) $ for any compact $K$. In particular, if $| \la| = 0$ on 
$\kx$, then $\la = 0$.
Also, $| \la(K) | \le \laplu(K) + \lamin(K)$.
\item \label{laplusupad1}
If $ F, C_1, \ldots, C_n$ are disjoint, $F$ is closed and all  $C_i$ are compact, then 
$$ \laplu(F \sc C_1  \sc \ldots \sc C_n) \ge \laplu(F)  + \laplu(C_1)  + \ldots +  \laplu(C_n)$$
and 
$$|\la| (F \sc C_1  \sc \ldots \sc C_n) \ge |\la| (F)  +| \la|(C_1)  + \ldots +  |\la|(C_n).$$
\item \label{laplureg}
$\laplu$ and $|\la|$ are  inner compact regular, i.e. 
$\laplu(U) = \sup \{\laplu(K): \  K \se U,  K \in \cx \}$  and 
$|\la| (U) = \sup \{| \la| (K): \  K \se U,  K \in \cx \}$ for any $U \in \ox.$ 
\item \label{comclosad}
If $ F, C_1, \ldots, C_n$ are disjoint, $F$ is closed and all  $C_i$ are compact, then 
$$\laplu(F \sc C_1  \sc \ldots \sc C_n) = \laplu(F)  + \laplu(C_1)  + \ldots +  \laplu(C_n)$$
and 
$$|\la| (F \sc C_1  \sc \ldots \sc C_n) = |\la| (F)  +| \la|(C_1)  + \ldots +  |\la|(C_n).$$
In particular, $\laplu$ and $|\la|$ are finitely additive on compact sets.
\item \label{laplusupad}
$\laplu$ and  $|\la|$  are superadditive, i.e. if $ \bsc_{t \in T} A_t \subseteq A, $  where $A_t, A \in \ox \cup \cx$,  
and at most one of the closed sets is not compact,
then 
$$\laplu(A) \ge \sum_{t \in T } \laplu(A_t) \mbox{     and     } |\la|(A) \ge \sum_{t \in T } |\la| (A_t).$$  
In particular, if $ C \se U$, where $C$ is compact and $U$ is open, then
$$\laplu(U \sm C) +\laplu(C) \le \laplu(U)  \mbox{     and     } |\la| (U \sm C)+ |\la| (C) \le |\la| (U) .$$
\item \label{s10}
$| \la | \le \laplu + \lamin.$  
\item
$(\laplu)^{+} = \laplu$.
\end{enumerate}
\end{lemma}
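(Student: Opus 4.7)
The plan is to unwind the definition of $(\laplu)^+$ and reduce the claim to two easy equalities, one on open sets and one on closed sets, using exactly the properties already established in items \ref{emptset}--\ref{comclosad} of the lemma. First I would note that $(\laplu)^+$ is well defined as a positive variation: by \ref{emptset} the set function $\laplu$ is nonnegative (in particular not identically $-\infty$), and by \ref{comclosad} it is finitely additive on compact sets, so Definition \ref{laplu} applies to $\laplu$ regarded as a signed set function on $\kx$.

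Next I would handle open sets. For $U \in \ox$,
\[
(\laplu)^+(U) \;=\; \sup\{\laplu(K) : K \se U,\ K \in \kx\}.
\]
The upper bound $(\laplu)^+(U) \le \laplu(U)$ follows from monotonicity of $\laplu$ on $\ox \cup \cx$, proved in \ref{laplmon}: each $\laplu(K)$ with $K \se U$ compact satisfies $\laplu(K) \le \laplu(U)$. For the reverse inequality I would use \ref{s5}, which gives $\la(K) \le \laplu(K)$ for every compact $K$; taking the sup over all compact $K \se U$ and applying the definition \eqref{modla open set} of $\laplu(U)$ yields $\laplu(U) \le (\laplu)^+(U)$.

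For closed sets the conclusion is immediate from definition \eqref{modla closed set} applied to $\laplu$ in place of $\la$: for $F \in \cx$,
\[
(\laplu)^+(F) \;=\; \inf\{(\laplu)^+(U) : F \se U,\ U \in \ox\} \;=\; \inf\{\laplu(U) : F \se U,\ U \in \ox\} \;=\; \laplu(F),
\]
where the middle equality uses the open-set identity just proved. There is really no serious obstacle here; the only point requiring a moment's care is verifying at the outset that $\laplu$ qualifies as an input to the positive-variation construction, i.e.\ that it is finitely additive on compact sets and not identically $\pm\infty$, both of which are secured by earlier items of this very lemma.
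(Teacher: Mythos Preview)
Your proof is correct and essentially the same as the paper's; the only difference is that the paper invokes \ref{laplureg} directly to obtain $(\laplu)^+(U)=\laplu(U)$ for open $U$ in one stroke, whereas you establish that same equality as two separate inequalities via \ref{laplmon} and \ref{s5} (which amounts to re-deriving the relevant part of \ref{laplureg}).
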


\begin{proof} 
\begin{enumerate}
\item[(s1)]
Since $ \la$ is not identically $\infty$ (or $-\infty$), from finite additivity of $\la$ on compact sets it follows that $|\la(\O)| < \infty$. Then $\la(\O) = 0$,
and the remaining statements easily follow. 
Note that $\laplu$ is non-negative, since $\laplu(\O) = 0$.
\item[(s2)] 
If $\laplu(U) = 0$ for an open set $U$, then $\la(K) \le 0$ 
 for any compact $K \se U$, and then by finite additivity of $\la$ on compact sets we have: 
 \begin{align*}
 |\la| (U) &= \sup \{ \sum_{i=1}^n | \la(K_i)| : \ \  \bsc_{i=1}^n K_i \se U, \  n \in \N \}  \\
 &=  \sup \{ \sum_{i=1}^n  - \la(K_i)  : \ \  \bsc_{i=1}^n K_i \se U,   \  n \in \N \} \\
 &= \sup \{ - \la(\bsc_{i=1}^n K_i)|  : \ \  \bsc_{i=1}^n K_i \se U,   \  n \in \N \} = \lamin(U). 
\end{align*}
Therefore, if $\laplu(U) = 0$ then $|\la |(U) =  \lamin(U)$, and 
if $\lamin(U) = 0$ then $|\la |(U) =  \laplu(U)$.
\item[(s3)]
Let $U, V \in \ox, \ \ F,G \in \cx$. Monotonicity of $\laplu $ and $|\la|$ is clear in the case $U \se V$, $ F \se U$ and then it is 
also easy for the cases $F \se G$, and $ U \se F$.
\item[(s4)]
Let $U_1 , U_2 \in \ox$ be disjoint. For any $C_1, C_2 \in \kx$ 
with $ C_i \se U_i, \ i=1,2$ we have by additivity of $\la$ on compact sets and  
Definition \ref{laplu}:
$$ \la(C_1)  + \la(C_2) = \la(C_1 \sc C_2) \le \laplu(U_1 \sc U_2).$$
Then by Definition  \ref{laplu} 
$$ \laplu(U_1) + \laplu(U_2) \le \laplu(U_1 \sc U_2).$$ 
For the converse inequality, note that given compact set $ C \se U_1 \sc U_2$ we 
have $C = C_1 \sc C_2$, where $C_i  = C \sm U_j = C \cap U_i \in \kx, \ i,j =1,2, \ \ i \neq j$. Then
$ \la(C) = \la(C_1) + \la(C_2) \le \laplu(U_1)  + \laplu(U_2),$ 
giving
$$ \laplu(U_1 \sc U_2)  \le  \laplu(U_1)  + \laplu(U_2).$$
So we have finite additivity of $\laplu$ on open sets. 
If $\bsc_{i=1}^n K_i \se U_1$ and $\bsc_{j=1}^m C_j \se U_2$, where $K_i, \ \ C_j \in \kx$ then 
$$ \sum_{i=1}^n |\la(K_i)| + \sum_{j=1}^m |\la(C_j)| \le |\la| (U_1 \sc U_2),$$
which by Definition  \ref{laplu}  shows that
$$ |\la| (U_1)  + |\la| (U_2)  \le |\la| (U_1 \sc U_2).$$
On the other hand, given any $\bsc_{i=1}^n C_i \se U_1 \sc U_2$ with $C_i \in \kx$ we may write
$\bsc_{i=1}^n C_i  = \bsc_{i=1}^n K_i \sc \bsc_{i=1}^n D_i$, where $K_i, D_i \in \kx, \ \ \ K_i \se U_1, \ \ \ D_i \se U_2$.
Then
$$ \sum_{i=1}^n |\la(C_i)|  \le \sum_{i=1}^n |\la(K_i)| + \sum_{i=1}^n |\la(D_i )| \le |\la|(U_1)  + |\la|(U_2), $$
which shows that   
$$ |\la| (U_1 \sc U_2) \le  |\la|(U_1)  + |\la|(U_2).$$ 
This gives the finite additivity of $| \la| $ on open sets. 
\item[(s5)]  
Let $K$ be compact. For any open set $U$ containing $K$ we have $\la(K)  \le \laplu(U)$ and 
$\la(K) \le |\la(K)| \le |\la|(U)$. Taking infimum over all such $U$ we obtain $\la(K) \le \laplu(K)$ and
$\la(K) \le | \la(K)| \le | \la| (K)$.  Since $0 \le | \la(K)| \le | \la| (K)$,  we see that if $| \la| = 0$ on 
$\kx$, then $\la = 0$. It is also easy to see that $| \la(K) | \le \laplu(K) + \lamin(K)$: for instance,
if $\la(K) \ge 0$ then $| \la(K) | = \la(K) \le \laplu(K) \le \laplu(K)  + \lamin(K)$.
\item[(s6)]
Consider first $| \la|$. It is enough to show that 
$|\la| (F \sc C) \ge |\la| (F)  + |\la| (C) $ where $ F$ is closed and $C$ is compact.
Take any open set $U$ containing $ F \sc C$. Since $X$ is completely regular,
find disjoint open sets $W, V \se U$  such that $F \se W$ and $C \se V$. By monotonicity and finite additivity
of $| \la |$ on open sets
$$ |\la| (F)  + | \la| (C)   \le  |\la| (W)  + | \la| (V) = | \la| (W \sc V) \le | \la| (U).$$
Passing to the infimum over $U$ we get $ |\la| (F)  + |\la| (C) \le |\la| ( F \sc C). $
A similar argument gives the statement for $\laplu$. 
\item[(s7)]
Let $U$ be open. By monotonicity of $\laplu$ and $| \la|$, for any compact set $K \se U$ we  have 
$\laplu(K) \le \laplu(U)$ and $|\la|(K) \le |\la|(U)$.
If $\laplu(U) = \infty$, then there are compact sets $K_n$ such that $\la(K_n) > n, \ \ n \in \N$. Then
by part \ref{s5} $ \laplu(K_n) \ge \la(K_n) > n$, and  $ \sup \{\laplu(K): \  K \se U,  K \in \kx \} = \infty = \laplu(U) $.
Assume now that $\laplu(U) < \infty$.
By Definition \ref{laplu}, given $\eps>0$, choose compact $K \se U$ such that $\laplu(U) - \la(K) <\eps$. Then
by part \ref{s5}  
$$ \laplu(U) - \laplu(K) \le \laplu(U) - \la(K) < \eps, $$
which shows that
$$\laplu(U) = \sup \{\laplu(K): \  K \se U,  K \in \cx \}.$$
Now we shall show the inner regularity for $| \la|$. 
If $|\la|(U) =\infty$, then for each $ n \in \N$ there is a finite disjoint family of compact sets $\bsc_{i \in I_n} K_i$ such that
$\sum_{i \in I_n} | \la(K_i)|  \ge n$. Let $C_n  = \bsc_{i \in I_n} K_i$. Then $C_n \se U, \ \ C_n \in \kx$ and
by parts \ref{s5} and \ref{laplusupad1} 
$$| \la| (C_n) \ge \sum_{i \in I_n} | \la| (K_i) \ge \sum_{i \in I_n} | \la(K_i)|  \ge n$$
for each $n \in \N$, so 
$ \sup \{ | \la| (C ): \ \ C \in \kx, C \se U \} = \infty =| \la| (U)  $. 
Assume now that $|\la| (U) < \infty$. For $\eps>0$ let $\bsc_{i=1}^n K_i \se U, \ \ \ K_i \in \kx$ be such that
$| \la| (U) - \sum_{i=1}^n |\la(K_i)| < \eps$. Let $K = \bsc_{i=1}^n K_i$, so compact $K \se U$.
Since $|\la| (K)  \ge \sum_{i=1}^n | \la| (K_i)  \ge \sum_{i=1}^n | \la (K_i) |$, 
we have $| \la| (U) - |\la|(K) \le  | \la| (U) - \sum_{i=1}^n |\la(K_i)| < \eps$, showing that 
$$|\la| (U) = \sup \{| \la| (K): \  K \se U,  K \in \cx \}.$$
\item[(s8)]
It is enough to show that 
$\laplu(F \sc C) = \laplu(F)  + \laplu(C) $ where $ F$ is closed and $C$ is compact. By monotonicity of $\laplu$
we may assume that $\laplu(F), \laplu(C) < \infty$.
For $\eps>0$, since $X$ is completely regular, we may find disjoint open sets $U, V$ 
such that $ F \se U, \ \ C \se V, \ \ \laplu(U), \laplu(V) < \infty,\ \ \ 
\laplu(U) - \laplu(F) <\eps, \ \ \laplu(V) - \laplu(C) <\eps.$ 
Then using parts \ref{laplmon} and \ref{lapluad1}
$$  \laplu(F \sc C) \le \laplu(U \sc V)  = \laplu(U) + \laplu(V) \le  \laplu(F)  + \laplu(C) + 2 \eps.$$
Together with part  \ref{laplusupad1} this gives
$ \laplu(F \sc C) = \laplu(F) + \laplu(C) $. A similar argument proves the statement for $| \la|$. 
\item[(s9)]
Since $\sum_{t \in T } \la(A_t) = 
\sup \{ \sum_{t \in T'} \la(A_t) : \ T' \se T,  \ T' \mbox{  is finite } \}$, 
it is enough to assume that $T$ is finite. By inner compact regularity of $\laplu$ (respectively, of $| \la|$ )
we may take all sets $A_t$ to be disjoint closed with at most one of them not compact.
The assertion follows  from part \ref{laplusupad1} and the monotonicity of  $\laplu$ (respectively, $| \la|$).
\item[(s10)]
By part \ref{s5} $| \la(K)| \le \laplu(K) + \lamin(K)$ for any compact set $K$, so using
parts \ref{comclosad} and \ref{laplusupad} it is easy to see that 
$ | \la| (A) \le \laplu (A)+ \lamin(A)$ for any open set $A$, and then for any closed set $A$. 
Thus, $|\la| \le \laplu + \lamin$.
\item[(s11)]
By part \ref{comclosad} $\laplu$ is finitely additive on compact sets, and by part \ref{emptset}  $\laplu(\O) =0$ so we may define 
$ ( \laplu)^+$. If $U$ is open, from part \ref{laplureg} it is easy to see that
$ ( \laplu)^+(U) = \laplu(U)$. Then $( \laplu)^+ = \laplu$.
\end{enumerate} 
\end{proof} 

%\begin{remark}
%Lemma \ref{lapluBasic} holds for any real-valued signed finitely additive set function on $\kx$.
%\end{remark}  

\begin{remark}
If $\la$ is a signed measure such that its total variation (as defined for measures) is inner regular, then  
it is not hard to show that $|\la|$ is a total variation of $\la$ restricted to $\ox \cup \cx$. 
\end{remark}

\section{Deficient topological measures} \label{snDTM}

\begin{Definition}\label{DTM}
A  deficient topological measure on a locally compact space $X$ is a set function
$\nu$ on $ \cx \cup \ox \longrightarrow [0, \infty]$ 
which is finitely additive on compact sets, inner compact regular, and 
outer regular, i.e. :
\begin{enumerate}[label=(DTM\arabic*),ref=(DTM\arabic*)]
\item \label{DTM1}
if $C \cap K = \O, \ C,K \in \kx$ then $\nu(C \sc K) = \nu(C) + \nu(K)$; 
\item \label {DTM2} 
$ \nu(U) = \sup\{ \nu(C) : \ C \se U, \ C \in \kx \} $
 for $U\in\ox$;
\item \label{DTM3} 
$ \nu(F) = \inf\{ \nu(U) : \ F \se U, \ U \in \ox \} $  for  $F \in \cx$.
\end{enumerate}
We denote by $\DTM(X)$ the collection of all deficient topological measures on $X$.
\end{Definition} 

\noindent
For a closed set $F$, $ \nu(F) = \infty$ iff $ \nu(U) = \infty$ for every open set $U$ containing $F$.

\begin{remark} \label{nuxDTM3}
Let $\nu$ be a deficient topological measure on $X$.
If $X$ is locally compact and locally connected then by Lemma \ref{LeCCoU} for each open set $U$
\[  \nu(U) = \sup\{ \nu(K) : \ K \subseteq U , \ K \in \bcox \}. \]
If $X$ is  locally compact, connected, and locally connected, then from Lemma \ref{LeConLC} 
\[  \nu(X)  = \sup\{ \nu(K) : \  K \in \bccx \}, \] 
and considering for a compact connected set $C \se X$ its solid hull $ \tilde C \in \bcsx, C \se \tilde C$ 
(see section 3 in \cite{Butler:TMLCconstr} for detail), we also obtain
\[  \nu(X)  = \sup\{ \nu(K) : \  K \in \bcsx \}. \] 
\end{remark}

\begin{remark} \label{DTMagree}
The argument as in part \ref{laplmon} of Lemma \ref{lapluBasic} shows that a deficient topological measure $\nu$ is monotone. 
Since $\nu$ is not identically $\infty$, from monotonicity and finite additivity on compact sets it follows that $\nu(\O) = 0$.
If $\nu$ and $\mu$ are deficient topological measures that agree on $\kx$, then by regularity $\nu =\mu$.
If $\nu$ and $\mu$ are deficient topological measures such that $\nu \le \mu$ on $\kx$ (or on $\ox$)  then $\nu  \le \mu$.
\end{remark}

\begin{remark} \label{DTMsum} 
It is easy to see that if $\nu$ and $\mu$ are deficient topological measures, then so is $\nu + \mu$, and so is 
$\alpha \mu$ for any $\alpha \ge 0$. 
Thus, deficient topological measures constitute a positive cone.
\end{remark}
 
In Definition \ref{DTM} we do not require that $\nu(C) < \infty$ for every compact set $C$. 
By analogy with semi-finite measures (see, for example, 
\cite{Bogachev}, 1.12.132) we define semi-finite deficient topological measures:

\begin{definition}
A deficient topological measure is semi-finite if for every set $A$ with $\mu(A) = \infty$ there exists $E  \in \ox \cup \cx$ 
such that $E \se A$ and $0 < \mu(E) < \infty$.
\end{definition}

\noindent
The deficient topological measure in part \ref{semifinDTM} of Example \ref{discrDTM} below is semi-finite.  
Examples of compact-finite deficient topological measures are provided in Example \ref{1ptRnDTM} below.

\begin{lemma} \label{nunupl}
If $\nu $ is a deficient topological measure, then $\nu = \nuplu = |\nu|$. 
A deficient topological measure is finitely additive on open sets, monotone, and superadditive,
in the sense of part \ref{laplusupad} of Lemma \ref{lapluBasic},
i.e. if $ \bsc_{t \in T} A_t \subseteq A, $  where $A_t, A \in \ox \cup \cx$,  
and at most one of the closed sets is not compact, then 
$\nu(A) \ge \sum_{t \in T } \nu(A_t)$. 
\end{lemma}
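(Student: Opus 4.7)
The plan is to apply Lemma \ref{lapluBasic} with $\la := \nu$, which is legitimate because a deficient topological measure is by definition finitely additive on compact sets and, being nonnegative and not identically $\infty$, takes at most one of the values $\pm\infty$. The core of the argument is to show $\nu = \nuplu = |\nu|$; once this is established, the three remaining properties (finite additivity on open sets, monotonicity, superadditivity) transfer immediately from the corresponding parts \ref{lapluad1}, \ref{laplmon}, \ref{laplusupad} of Lemma \ref{lapluBasic}.

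To prove $\nuplu = \nu$, I would check agreement first on open sets and then propagate it to closed sets using outer regularity. For open $U$, Definition \ref{laplu} gives
\[ \nuplu(U) = \sup\{\nu(K): K \se U,\ K \in \kx\}, \]
which is exactly $\nu(U)$ by axiom \ref{DTM2}. For closed $F$,
\[ \nuplu(F) = \inf\{\nuplu(U): F \se U,\ U \in \ox\} = \inf\{\nu(U): F \se U,\ U \in \ox\} = \nu(F) \]
by what we just showed together with axiom \ref{DTM3}.

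For $|\nu| = \nu$, I would again start on open sets. Since $\nu \ge 0$, we have $|\nu(K_i)| = \nu(K_i)$ for every compact $K_i$, and by finite additivity of $\nu$ on compact sets together with axiom \ref{DTM2},
\[ \sum_{i=1}^n |\nu(K_i)| = \nu\Bigl(\bsc_{i=1}^n K_i\Bigr) \le \nu(U) \]
whenever $\bsc_{i=1}^n K_i \se U$. Taking the supremum shows $|\nu|(U) \le \nu(U)$. The reverse inequality is immediate from Remark \ref{absla} and the identity $\nuplu = \nu$ established above, namely $\nu(U) = \nuplu(U) \le |\nu|(U)$. Passing to closed sets by outer regularity (as in the argument for $\nuplu$) gives $|\nu|(F) = \nu(F)$ for every $F \in \cx$.

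With $\nu = \nuplu = |\nu|$ in hand, the remaining statements are read off from Lemma \ref{lapluBasic}: finite additivity on open sets is part \ref{lapluad1} applied to $\laplu$, monotonicity is part \ref{laplmon}, and superadditivity in the stated sense is part \ref{laplusupad}. There is no real obstacle here; the only point deserving care is making sure that the supremum defining $|\nu|(U)$ is bounded above by $\nu(U)$, which is where finite additivity of $\nu$ on compact sets (axiom \ref{DTM1}) is essential to collapse the sum $\sum \nu(K_i)$ into a single value $\nu(\bsc K_i)$ before invoking \ref{DTM2}.
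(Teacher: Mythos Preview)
Your proof is correct and follows essentially the same route as the paper: both verify $\nu=\nuplu$ directly from \ref{DTM2}--\ref{DTM3} and then read off the remaining properties from Lemma~\ref{lapluBasic}. The only cosmetic difference is that the paper obtains $|\nu|=\nu$ by first noting $\numin=0$ and then citing part~\ref{s2} of Lemma~\ref{lapluBasic}, whereas you bound $|\nu|(U)$ directly via finite additivity on compacts---the underlying computation is the same.
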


\begin{proof}
From Remark \ref{DTMagree} $ \nu(\O) = 0$, and so we may apply Lemma \ref{lapluBasic}.
From Definition \ref{laplu} we see that $ \nu = \nuplu, \, \numin = 0$ and then from part \ref{s2} of Lemma \ref{lapluBasic} 
it follows that $\nu = \nuplu = |\nu|$.
Again by Lemma \ref{lapluBasic},  any deficient topological measure is finitely additive on open sets, 
monotone, and superadditive.
\end{proof}

%\begin{remark} \label{supadSle}
%Here is an easy consequence of superadditivity.
%Let  $\nu$ be a non-negative superadditive set function (in particular, a deficient topological measure) 
%$A, B, C \in \ox \cup \kx$ and $ B \sc C \se A$.
%If $\nu(A) = \nu(B)$ then $\nu(C) = 0$. 
%\end{remark}

\begin{lemma} \label{KOfiniteaddDTM}
Let $\nu : \kx \cup \ox \longrightarrow [0, \infty]$  be a set function which is inner compact regular on open sets  
and outer regular on closed sets, i.e. 
$ \nu(U) = \sup\{ \nu(C) : \ C \se U, \ C \in \kx \} $ for each open set $U$, and 
$ \nu(F) = \inf\{ \nu(U) : \ F \se U, \ U \in \ox \} $ for each closed set $F$.
(In particular, this holds for a deficient topological measure.) 
Then 
\begin{enumerate}[label=(\roman*),ref=(\roman*)]
\item
$\nu$ is monotone.
\item
$\nu$ is $\tau$-smooth on open sets; in particular, $\nu$ is countably additive on $\ox$.
\item
When $X$ is locally compact $\nu$ is finitely additive on compact sets iff it is finitely additive on open sets. 
\end{enumerate}
\end{lemma}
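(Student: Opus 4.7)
The plan is to handle (i), (ii), (iii) in order since (ii) uses (i) and (iii) uses both. For (i), monotonicity $\nu(A) \leq \nu(B)$ for $A \subseteq B$ in $\kx \cup \ox$ is handled case by case. The $K \subseteq U$ case (with $K \in \kx$, $U \in \ox$) is immediate from inner compact regularity of $\nu(U)$. The compact-inside-compact case then follows by combining this with outer regularity of the larger compact set. The open-inside-compact case reduces to the compact-inside-compact case via inner compact regularity of the open set. Finally, the open-inside-open case follows directly from the compact-inside-open case and inner compact regularity.

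For (ii), $\tau$-smoothness on open sets follows from Remark~\ref{netsSETS}(ii): monotonicity yields $\lim \nu(U_\alpha) \leq \nu(U)$, and for the reverse inequality any compact $K \subseteq U$ eventually satisfies $K \subseteq U_{\alpha_0}$, so $\nu(K) \leq \nu(U_{\alpha_0}) \leq \lim \nu(U_\alpha)$, and taking the supremum via inner compact regularity of $\nu(U)$ closes the loop. Countable additivity on a disjoint family $\{U_n\} \subseteq \ox$ then follows by applying $\tau$-smoothness to the increasing open union $V_N = \bsc_{n=1}^N U_n \nearrow \bsc_n U_n$, combined with finite additivity on open sets (which, in the intended context, holds by part (iii) or directly by the deficient topological measure axioms).

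For (iii), both directions rely on separating disjoint compact sets by disjoint open sets in the locally compact Hausdorff space $X$. For ``($\Leftarrow$)'', given disjoint compact $K_1, K_2$ and any open $W \supseteq K_1 \sqcup K_2$, I would intersect a Hausdorff separation of $K_1, K_2$ with $W$ to obtain disjoint open $U_i \subseteq W$ with $K_i \subseteq U_i$; finite additivity on open sets together with monotonicity then gives $\nu(K_1) + \nu(K_2) \leq \nu(U_1 \sqcup U_2) \leq \nu(W)$, and outer regularity of $\nu(K_1 \sqcup K_2)$ yields $\nu(K_1) + \nu(K_2) \leq \nu(K_1 \sqcup K_2)$. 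For the reverse inequality, in the case where both $\nu(K_i)$ are finite, I would, for each $\epsilon > 0$, intersect outer-regular neighborhoods of $K_i$ with the disjoint pair to obtain disjoint $U_i$ with $\nu(U_i) < \nu(K_i) + \epsilon$, yielding $\nu(K_1 \sqcup K_2) \leq \nu(U_1 \sqcup U_2) \leq \nu(K_1) + \nu(K_2) + 2\epsilon$; the infinite case is trivial by monotonicity. For ``($\Rightarrow$)'', Lemma~\ref{HalmEzLe} (or directly the decomposition $K = (K \cap U_1) \sqcup (K \cap U_2)$) splits any compact $K \subseteq U_1 \sqcup U_2$ into compact pieces $K_i \subseteq U_i$, and finite additivity on compact sets combined with inner compact regularity then delivers both inequalities between $\nu(U_1 \sqcup U_2)$ and $\nu(U_1) + \nu(U_2)$.

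The main obstacle I anticipate is the careful bookkeeping in $[0, \infty]$ in part (iii), where subtraction fails at infinity and so each two-sided inequality must be treated by separating the finite from the infinite case and using monotonicity to trivialize the latter; there is also a minor subtlety in (ii) that the phrase ``countable additivity'' presupposes finite additivity on open sets, which is not among the bare hypotheses of the lemma but is available in the intended applications.
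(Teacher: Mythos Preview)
Your proof is correct and matches the paper's approach almost exactly: the paper refers back to part~\ref{laplmon} of Lemma~\ref{lapluBasic} for (i), gives the same $\tau$-smoothness argument via Remark~\ref{netsSETS} for (ii), and for (iii) carries out the same separation-by-disjoint-opens argument with the same finite/infinite case splits in $[0,\infty]$. Your observation that the clause ``in particular, $\nu$ is countably additive on $\ox$'' tacitly presupposes finite additivity on open sets (not among the bare hypotheses) is well taken; the paper does not address this explicitly.
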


\begin{proof}
\begin{enumerate}[label=(\roman*),ref=(\roman*)]
\item
Use argument as in part \ref{laplmon} of Lemma \ref{lapluBasic}.
\item
Suppose $U_s \nearrow U, \, U_s, U \in \ox$.  
Let compact $K \se U.$   By Remark \ref{netsSETS}, there is $t \in S$ such that $K \se U_s$ for all $s \ge t$.
Then $\mu(K) \le \mu(U_s) \le \mu(U)$ for all $s \ge t$, and we see from the inner regularity
(whether $ \mu(U) < \infty$ or $ \mu(U) =\infty$)  that $\mu(U_s) \nearrow \mu(U)$.
\item
Assume $\nu$ is  finitely additive on open sets. 
We need to show that
\begin{align} \label{gh1}
\nu(C \sc K )  = \nu(C) + \nu(K) 
\end{align}
for any compact sets $C, \ K$.
By monotonicity, (\ref{gh1}) holds if at least one of 
$\nu(C)= \infty$ or $\nu(K) = \infty$. So let $\nu(C) < \infty $ and $\nu(K) < \infty$. For $\eps >0$ 
pick disjoint open sets $U, V$ such that $C \se U, \ K \se V,  \nu(U) - \nu(C) < \eps,  \ \nu(V) - \nu(K) < \eps$.
Then 
$$ \nu(C \sc K)  \le \nu( U \sc V) = \nu(U) + \nu(V) < \nu(C) + \nu(K)  + 2 \eps,$$
which shows that $ \nu(C \sc K)  \le  \nu(C) + \nu(K) $.
To show the opposite inequality, note that it trivially holds when $\nu(C \sc K) = \infty$, so we assume that
$\nu(C \sc K) < \infty$.  For $\eps >0$  let $W$ be an open set such that $ C \sc K \se W$ and 
$\nu(W) -\nu(C \sc K ) < \eps$. Pick disjoint open sets $U, V$ such that $C \se U, \ K \se V, \ U \sc V \se W$. 
%From inner compact regularity we see that $\nu$ is monotone on $\ox$. 
Now 
$$ \nu(C \sc K) + \eps  \ge \nu(W) \ge \nu( U \sc V)  = \nu(U)  + \nu(V) \ge \nu(C)  + \nu(K), $$
which gives $ \nu(C \sc K)  \ge  \nu(C) + \nu(K) $.
Thus, $ \nu(C \sc K)  = \nu(C) + \nu(K) $, 
and $\nu$ is finitely additive on compact sets.

One can prove that finite additivity on compact sets implies finite additivity on open sets in a similar manner, using the 
simple fact that if compact $K \se U_1 \sc U_2$ then $K= K_1 \sc K_2$, where $K_1 \se U_1, K_2 \se U_2$.
\end{enumerate} 
\end{proof}

\begin{proposition} \label{lapldtm}
Let $X$ be locally compact.
Suppose $\la: \kx \rightarrow  [-\infty, \infty] $ which assumes at most one of $ \infty, -\infty$
is a signed set function which is finitely additive on compact sets.
% and $| \la(\O) | < \infty$. 
Then
\begin{enumerate}[label=(\Roman*),ref=(\Roman*)]
\item \label {Ri1}
The set functions $\laplu, \lamin, |\la|$ defined as in Definition \ref{laplu} are deficient topological measures, and 
$ |\la| \le \laplu + \lamin$. 
\item \label{Ri2}
$\laplu $  is the unique 
smallest deficient topological measure such that $ \laplu(K)  \ge \la(K)$ for every $K \in \kx$
and $\lamin $  is the unique largest deficient topological measure 
such that $ - \lamin (K)  \le \la (K) $  for every $K \in \kx$.
\item \label{Ri3}
If $\sup \{ | \la(K) | : \  K \in \kx | \} \le M < \infty$ then $\laplu, \lamin$ and $| \la| $ are real-valued deficient topological measures,
$ \laplu(X), \lamin(X) \le M, \, | \la| (X) \le 2 M$.
\end{enumerate}
\end{proposition}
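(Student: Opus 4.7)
For part (I), the bulk of the verification is already packaged inside Lemma \ref{lapluBasic}. Non-negativity is (s1); finite additivity on compact sets is (s8); outer regularity on closed sets is built into Definition \ref{laplu}. The only axiom requiring any care is inner compact regularity in the DTM sense, which demands $K \in \kx$ rather than merely $K \in \cx$ as delivered by (s7). For $\laplu$ the upgrade is immediate: combining the defining identity $\laplu(U) = \sup\{\la(K): K \se U, K \in \kx\}$ with (s5) ($\la(K) \le \laplu(K)$) and monotonicity (s3) sandwiches
\[ \laplu(U) = \sup\{\laplu(K): K \se U, K \in \kx\}. \]
An analogous sandwich works for $|\la|$, using (s5), (s6), and (s8): for any disjoint compact family $\bsc_i K_i \se U$, finite additivity on compact sets gives $\sum_i |\la(K_i)| \le \sum_i |\la|(K_i) = |\la|(\bsc_i K_i)$, and the single compact set $\bsc_i K_i$ already sits below $U$. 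The inequality $|\la| \le \laplu + \lamin$ is exactly (s10). The claim for $\lamin$ is obtained by running the whole argument with $-\la$ in place of $\la$.

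For part (II), let $\mu$ be any DTM with $\mu(K) \ge \la(K)$ for every $K \in \kx$. Inner compact regularity of $\mu$ and monotonicity give, for each open $U$,
\[ \mu(U) = \sup\{\mu(K): K \se U, K \in \kx\} \ge \sup\{\la(K): K \se U, K \in \kx\} = \laplu(U). \]
Outer regularity on closed sets extends the inequality: $\mu(F) = \inf\{\mu(U): F \se U, U \in \ox\} \ge \inf\{\laplu(U): F \se U, U \in \ox\} = \laplu(F)$, so $\mu \ge \laplu$ on all of $\ox \cup \cx$. Uniqueness of the extremal DTM with this property is then automatic from Remark \ref{DTMagree}. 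The corresponding extremal characterization of $\lamin = (-\la)^+$ follows by replaying the same argument with $-\la$, rewriting the condition $\mu(K) \ge -\la(K)$ as $-\mu(K) \le \la(K)$.

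For part (III), assume $\sup\{|\la(K)|: K \in \kx\} \le M$. Directly from Definition \ref{laplu}, $\laplu(U) \le M$ and $\lamin(U) \le M$ for every $U \in \ox$. For $|\la|$, given any finite disjoint compact family $\bsc_{i=1}^n K_i \se U$, split the indices into $P = \{i: \la(K_i) \ge 0\}$ and $N = \{i: \la(K_i) < 0\}$; by finite additivity of $\la$ on compact sets,
\[ \sum_{i \in P} |\la(K_i)| = \la\bigl(\bsc_{i \in P} K_i\bigr) \le M, \qquad \sum_{i \in N} |\la(K_i)| = -\la\bigl(\bsc_{i \in N} K_i\bigr) \le M, \]
whence $\sum_i |\la(K_i)| \le 2M$ and therefore $|\la|(U) \le 2M$. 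These bounds transfer to closed sets via outer regularity, giving $\laplu(X), \lamin(X) \le M$ and $|\la|(X) \le 2M$, and in particular making $\laplu, \lamin, |\la|$ everywhere real-valued.

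The main (and essentially only) obstacle is the bookkeeping step in (I) that lifts the inner regularity in (s7) from $K \in \cx$ to $K \in \kx$ required by \ref{DTM2}; once (s5) is invoked this is a short sandwich. Everything else reduces to direct application of the pieces already assembled in Lemma \ref{lapluBasic} together with the extremality argument that inner regularity on open sets plus outer regularity on closed sets propagates pointwise inequalities from $\kx$ to all of $\ox \cup \cx$.
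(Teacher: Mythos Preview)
Your proof is correct and follows essentially the same route as the paper. Two minor remarks: the apparent gap you patch in (I) stems from a typo in the statement of (s7) (it reads $K\in\cx$ but is labelled ``inner compact regular'' and its proof works entirely with compact sets), so your sandwich argument, while correct, is not strictly needed; and for (III) the paper simply invokes (s10) to get $|\la|(X)\le 2M$, whereas you give the direct splitting-by-sign argument, which is a slight but equally valid alternative.
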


\begin{proof}
\begin{enumerate}[label=(\Roman*),ref=(\Roman*)]
\item
Outer regularity follows from Definition \ref{laplu}, the rest follows from Lemma \ref{lapluBasic}. 
\item
Suppose that $\nu$ is a deficient topological measure
such that $\nu(K) \ge \la(K)$ for all $K \in \kx$. By inner compact regularity of $\nu$, we have $\nu(U) \ge \laplu(U)$ 
for any open set $U$, i.e $\nu \ge \laplu$ on $ \ox$.
Both $\nu$ and $\laplu$ are deficient topological measures, so by Remark \ref{DTMagree}
$\nu \ge \laplu$. Thus, $\laplu $  is the unique smallest deficient topological measure such that 
$ \laplu  \ge \la$  on $ \kx$. Since $\lamin = (-\la)^+$, the last assertion follows. 
\item
Clearly, $ \laplu(X), \lamin(X) < M$. By part \ref{s10} of Lemma \ref{lapluBasic} $| \la| (X) \le 2 M$.
\end{enumerate} 
\end{proof}

\begin{lemma} \label{Propdetm}
Let $X$ be locally compact and let $\nu$ be a deficient topological measure on $X$. 
Given an open  set $U$ with $\nu(U) < \infty$ and $\eps >0$ there exists $C \se U, \ C \se \kx$ 
such that for any compact or open 
$E \se U \sm C$ we have $\nu(E) < \eps$.
\end{lemma}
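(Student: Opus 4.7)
The plan is to combine inner compact regularity of $\nu$ on the finite-measure open set $U$ with the superadditivity of $\nu$ from Lemma \ref{nunupl}.

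First I would invoke property \ref{DTM2} of Definition \ref{DTM} for the open set $U$: since $\nu(U) = \sup\{\nu(K) : K \se U,\ K \in \kx\}$ and $\nu(U) < \infty$, I can choose $C \in \kx$ with $C \se U$ and
\[
\nu(U) - \nu(C) < \eps.
\]
This is the set $C$ I will use in the conclusion of the statement.

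Next, fix any $E \se U \sm C$ with $E \in \kx$ or $E \in \ox$. In either case $E \in \ox \cup \cx$, and the sets $C$ and $E$ are disjoint and both contained in $U$. Note that $C$ is compact, so at most one of the closed sets in the family $\{C, E\}$ fails to be compact (in fact none does when $E$ is open; the single closed set $C$ is compact when $E$ is compact). Hence the superadditivity statement from Lemma \ref{nunupl} applies and yields
\[
\nu(C) + \nu(E) \le \nu(U).
\]
Rearranging and using the choice of $C$ gives $\nu(E) \le \nu(U) - \nu(C) < \eps$, which is exactly the conclusion.

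There is essentially no obstacle here beyond verifying that the superadditivity of Lemma \ref{nunupl} covers the mixed case where one summand is compact and the other is open. That is ensured because open sets lie in $\ox \cup \cx$ and the only closed set in the family, $C$, is compact, so the hypothesis ``at most one of the closed sets is not compact'' is trivially satisfied. The only subtle point to keep in mind is that $\nu(U) < \infty$ is needed both to extract a compact approximant within $\eps$ and to legitimately rearrange the superadditivity inequality.
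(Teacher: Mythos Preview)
Your proof is correct and follows essentially the same approach as the paper: choose $C$ by inner regularity so that $\nu(U)-\nu(C)<\eps$, then use superadditivity to bound $\nu(E)$. The only cosmetic difference is that the paper routes the estimate through $\nu(U\sm C)$ via monotonicity (writing $\nu(E)\le\nu(U\sm C)\le\nu(U)-\nu(C)$), whereas you apply superadditivity directly to the pair $\{C,E\}$; these are the same argument.
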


\begin{proof} 
For an open $U$ by regularity choose compact $C \se U$ with $\nu( U )  - \nu(C) < \eps$.
Then by monotonicity and superadditivity of $\nu$, for any compact or open $E \se U \sm C$ we have
$\nu(E)  \le \nu(U \sm C) \le \nu(U)  - \nu(C) < \eps$.
\end{proof}

\begin{proposition} \label{reg4ext}
A nonnegative set function $\la: \kx \rightarrow [0, \infty] $  has a unique extension
to a deficient topological measure $ \mu$ on $X$ if and only if
% $\la(\O) < \infty$, 
$\la$ is finitely additive on compact sets and satisfies the following condition:

For any compact $C$ with $ \la(C) < \infty$ and any $\eps > 0$ there exists an open set $U$ containing $C$ such that 
\begin{align}  \label{regusl}
K \se U, \ K \in \kx \Longrightarrow  \la(K) \le \la (C) + \eps.  
\end{align} 

In this case $\mu = \laplu$. 
\end{proposition}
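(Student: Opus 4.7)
My plan is to identify the extension as $\laplu$ from the start: Proposition \ref{lapldtm}\ref{Ri1} already guarantees that $\laplu$ is a deficient topological measure whenever $\la$ is finitely additive on compact sets, and Remark \ref{DTMagree} delivers uniqueness once we know that any deficient topological measure agreeing with $\la$ on $\kx$ is determined. So the two real tasks are (a) deducing the condition (\ref{regusl}) from the existence of an extension, and (b) using (\ref{regusl}) to show $\laplu(C) = \la(C)$ for every compact $C$.

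For the necessity direction, suppose $\mu$ is a deficient topological measure extending $\la$. Finite additivity on compact sets is exactly \ref{DTM1}. Given compact $C$ with $\la(C) < \infty$ and $\eps > 0$, outer regularity \ref{DTM3} produces an open $U \supseteq C$ with $\mu(U) < \mu(C) + \eps$. For any compact $K \se U$, monotonicity of $\mu$ (Remark \ref{DTMagree}) gives $\la(K) = \mu(K) \le \mu(U) < \la(C) + \eps$, which is precisely (\ref{regusl}).

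For sufficiency, by part \ref{s5} of Lemma \ref{lapluBasic} we have $\la(C) \le \laplu(C)$ for every compact $C$, so if $\la(C) = \infty$ then already $\laplu(C) = \infty = \la(C)$. If $\la(C) < \infty$, fix $\eps > 0$ and apply (\ref{regusl}) to get an open $U \supseteq C$ for which every compact $K \se U$ satisfies $\la(K) \le \la(C) + \eps$. Taking the supremum in the definition (\ref{modla open set}) yields $\laplu(U) \le \la(C) + \eps$, and the outer-regularity clause (\ref{modla closed set}), applied to the closed set $C$, gives $\laplu(C) \le \laplu(U) \le \la(C) + \eps$. Letting $\eps \to 0$ produces $\laplu(C) \le \la(C)$, so $\laplu$ extends $\la$; since $\laplu$ is a deficient topological measure by Proposition \ref{lapldtm}\ref{Ri1}, this settles existence. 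Uniqueness is immediate from Remark \ref{DTMagree}, as any other deficient topological measure extension would have to agree with $\laplu$ on $\kx$ and hence everywhere.

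I do not foresee a genuine obstacle: the only mildly delicate point is keeping the $\la(C) = \infty$ case separated from the regularity argument, and recognizing that the outer-regular formula (\ref{modla closed set}) for $\laplu$ on a compact set $C$ exactly matches the open-set $U$ produced by (\ref{regusl}). Everything else is a direct assembly of pieces already available in Lemma \ref{lapluBasic} and Proposition \ref{lapldtm}.
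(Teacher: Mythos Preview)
Your proposal is correct and follows essentially the same argument as the paper: both directions rely on identifying the extension as $\laplu$, using condition (\ref{regusl}) to bound $\laplu(U)$ and hence $\laplu(C)$ above by $\la(C)+\eps$, with the $\la(C)=\infty$ case handled separately via part \ref{s5} of Lemma \ref{lapluBasic}. The only cosmetic difference is that the paper invokes the ``unique smallest'' characterization from Proposition \ref{lapldtm}\ref{Ri2} rather than Remark \ref{DTMagree} for uniqueness, but these amount to the same observation.
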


\begin{proof}
Necessity and condition (\ref{regusl}) follow from the definition and monotonicity of a deficient topological measure. 
To prove sufficiency, observe that by Proposition \ref{lapldtm}, the set function $\laplu$ is the 
unique smallest deficient topological measure such that 
$\laplu \ge \la$ on $\kx$. 
If $C$ is compact with $ \la(C) = \infty$, then also $ \laplu(C) = \infty$.
For $C \in \kx$ with $ \la(C) < \infty$ and $\eps > 0$, let $U$ be the set given by condition (\ref{regusl}). 
Then $$ \laplu(C) \le \laplu(U)  = \sup \{ \la (K): \  K \se U, \ K \in \kx \} \le \la(C) + \eps,$$
which implies that  $\laplu = \la$ on $\kx$.  So $ \laplu$ is the desired extension $ \mu$. 
\end{proof}

\begin{remark}
Proposition \ref{reg4ext} is the generalization to a locally compact setting of Corollary 2 in \cite{Svistula:DTM}.
\end{remark}

\begin{lemma} \label{opaddDTM}
A deficient topological measure on a locally compact space is $\tau-$ smooth on compact sets and 
$\tau-$ smooth on open sets. In particular, a deficient topological measure is  additive on open sets. 
\end{lemma}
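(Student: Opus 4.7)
My plan is to establish the two smoothness properties separately, using in each case the appropriate regularity together with the elementary fact in Remark \ref{netsSETS}, and then to deduce additivity on open sets as an immediate corollary of $\tau$-smoothness on open sets together with finite additivity on open sets already known from Lemma \ref{lapluBasic}\ref{lapluad1} (equivalently from Lemma \ref{KOfiniteaddDTM}(iii)).

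For $\tau$-smoothness on open sets, I would simply observe that a deficient topological measure satisfies the hypotheses of Lemma \ref{KOfiniteaddDTM} (inner compact regular on open sets and outer regular on closed sets), so the conclusion is exactly part (ii) of that lemma: if $U_\alpha \nearrow U$ with $U_\alpha, U \in \ox$, then by inner compact regularity of $\nu$ at $U$ every compact $K \se U$ eventually lies in some $U_\alpha$ by Remark \ref{netsSETS}(ii), and passing to the supremum over such $K$ gives $\nu(U_\alpha) \nearrow \nu(U)$.

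The substantive step is $\tau$-smoothness on compact sets. Suppose $K_\alpha \searrow K$ with all $K_\alpha, K \in \kx$. Since $K \se K_\alpha$, monotonicity (Lemma \ref{nunupl}) gives $\nu(K) \le \nu(K_\alpha)$ for every $\alpha$, so the case $\nu(K) = \infty$ is immediate. When $\nu(K) < \infty$, fix $\eps > 0$ and use outer regularity of $\nu$ (property \ref{DTM3}) to choose an open set $U \supseteq K$ with $\nu(U) < \nu(K) + \eps$; note $\nu(U)$ may be taken finite. By Remark \ref{netsSETS}(i), since $K$ and each $K_\alpha$ are compact and $K_\alpha \searrow K \se U$, there exists $\alpha_0$ such that $K_\alpha \se U$ for all $\alpha \ge \alpha_0$. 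Monotonicity then yields $\nu(K) \le \nu(K_\alpha) \le \nu(U) < \nu(K) + \eps$ for all $\alpha \ge \alpha_0$, so $\nu(K_\alpha) \to \nu(K)$. This is the only step that requires any real idea, and its only delicate point is justifying the passage into the covering open set, which is exactly what Remark \ref{netsSETS}(i) supplies.

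Finally, for additivity on open sets, let $\{U_t\}_{t \in T}$ be pairwise disjoint open sets whose disjoint union $U := \bsc_{t \in T} U_t$ is open. For each finite $F \se T$, set $V_F := \bsc_{t \in F} U_t$; then $V_F \in \ox$ and finite additivity on open sets (Lemma \ref{lapluBasic}\ref{lapluad1}) yields $\nu(V_F) = \sum_{t \in F} \nu(U_t)$. The net $\{V_F\}$ indexed by finite subsets of $T$ satisfies $V_F \nearrow U$, so $\tau$-smoothness on open sets (just proved) gives
\[
\nu(U) = \lim_F \nu(V_F) = \sup_F \sum_{t \in F} \nu(U_t) = \sum_{t \in T} \nu(U_t),
\]
which is the desired additivity on open sets.
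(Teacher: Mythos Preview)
Your proof is correct and follows essentially the same approach as the paper: the paper likewise derives $\tau$-smoothness on open sets by invoking Lemma~\ref{KOfiniteaddDTM}, and proves $\tau$-smoothness on compact sets by the same outer-regularity argument combined with Remark~\ref{netsSETS}(i). Your explicit derivation of (arbitrary) additivity on open sets from $\tau$-smoothness plus finite additivity is simply a spelled-out version of what the paper leaves implicit in the ``in particular'' clause.
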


\begin{proof}
Let $\nu$ be a deficient topological measure.
To show the $\tau-$ smoothness on compact sets, let
$K_\alpha \searrow K, K_\alpha, K \in \kx$.
It is enough to consider the case $\nu(K) < \infty$.  For $\eps >0$ let $U$ be an open set such that 
$ K \se U, \ \nu(U) < \infty$ and $\nu(U) - \nu(K) < \eps. $ By Remark \ref{netsSETS}, there exists $\alpha_0$ such that
$K_\alpha \se U $ for all $\alpha \ge \alpha_0$. 
Then 
$\nu(K_\alpha ) \le \nu(K_{\alpha_0})  \le \nu(U) < \nu(K) + \eps$
for all $\alpha \ge \alpha_0$, and the $\tau-$ smoothness on compact sets follows.
$\tau-$ smoothness on open sets follows from Lemma \ref{KOfiniteaddDTM}. 
\end{proof}

\section{Deficient topological measures, topological measures, and measures} \label{MTMDTM}

In this section we discuss the relationship between deficient topological measures, topological measures, and measures. 

\begin{Definition}\label{TMLC}
A topological measure on $X$ is a set function
$\mu:  \cx \cup \ox \to [0,\infty]$ satisfying the following conditions:
\begin{enumerate}[label=(TM\arabic*),ref=(TM\arabic*)]
\item \label{TM1} 
if $A,B, A \sc B \in \kx \cup \ox $ then
$
\mu(A\sqcup B)=\mu(A)+\mu(B);
$
\item \label{TM2}  
$
\mu(U)=\sup\{\mu(K):K \in \bcx, \  K \se U\}
$ for $U\in\ox$;
\item \label{TM3}
$
\mu(F)=\inf\{\mu(U):U \in \ox, \ F \se U\}
$ for  $F \in \cx$.
\end{enumerate}
We denote by $\TM(X)$ the collection of all topological measures on $X$.
\end{Definition} 

When $X$ is compact  $\kx = \cx$, and we simplify Definition \ref{DTM} and Definition \ref{TMLC} accordingly.
On a compact space we consider real-valued deficient topological measures.

\begin{remark}
The immediate difference between the definitions of a topological measure and a deficient 
topological measure is that a deficient topological measure is additive on 
compact sets, while a topological 
measure is additive on $ \ox \cup \cx$. 
\end{remark}

By $\M(X)$ we denote the collection of all Borel measures on $X$ that are inner regular on open sets and 
outer regular (restricted to $\ox \cup \cx$). 

\begin{remark}
Let $X$ be locally compact. In general,
$$ \M(X) \subsetneqq  \TM(X) \subsetneqq  \DTM(X). $$
The proper inclusion $ \M(X) \subsetneqq  \TM(X)$ follows from  
examples in the last section in \cite{Butler:TMLCconstr}. 
The proper inclusion $ \TM(X) \subsetneqq  \DTM(X)$ follows from examples in the last section of this paper. 
We would like to know when  a deficient topological measure is  a measure from $\M(X)$ or a topological measure.
\end{remark}

\begin{theorem} \label{DTMtoTM}
\begin{enumerate}[label=(\Roman*),ref=(\Roman*)]
\item
Let $X$ be compact, and $\nu$ a deficient topological measure. The following are equivalent:
\begin{enumerate}
\item[(a)]
$\nu$ is a topological measure
\item[(b)]
$\nu(X) = \nu(C)  + \nu(X \sm C), \ \ \ C \in \cx$ 
\item[(c)]
$\nu(X) \le \nu(C)  + \nu(X \sm C), \ \ \ C \in \cx$  
\end{enumerate}
\item
Let $X$ be locally compact, and $\nu$ a deficient topological measure. 
The following are equivalent:
\begin{enumerate}
\item[(a)]
$\nu$ is a topological measure
\item[(b)]
$\nu(U) = \nu(C)  + \nu(U \sm C), \ \ \ C \in \kx, \ \ \ U \in \ox$ 
\item[(c)]
$\nu(U) \le \nu(C)  + \nu(U \sm C), \ \ \ C \in \kx, \ \ \ U \in \ox$
\end{enumerate}
\end{enumerate}
\end{theorem}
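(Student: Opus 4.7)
I would treat both parts by the same scheme. The easy observations are: (a)$\Rightarrow$(b) follows by applying (TM1) to the disjoint decomposition $X = C \sqcup (X\setminus C)$ in part (I) (all three sets lie in $\cx\cup\ox$) and $U = C \sqcup (U\setminus C)$ in part (II) (all three in $\kx\cup\ox$); (b)$\Rightarrow$(c) is trivial; and the superadditivity of $\nu$ (Lemma \ref{nunupl}) gives $\nu(U) \ge \nu(C) + \nu(U\setminus C)$ automatically, so (c) upgrades to the equality in (b). Thus the real content is (b)$\Rightarrow$(a).

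For (b)$\Rightarrow$(a) I would verify (TM1) by cases on the types of $A$, $B$ and $A\sqcup B$ in $\kx\cup\ox$. The compact--compact case is exactly (DTM1), the open--open case is open-set additivity (Lemma \ref{KOfiniteaddDTM}), and up to symmetry the remaining case has $A\in\kx$ (or $A\in\cx$ in part (I)) and $B\in\ox$. If $A\sqcup B$ is open, then (b) with $C=A$ and $U=A\sqcup B$ gives $\nu(A\sqcup B)=\nu(A)+\nu(B)$ immediately. The substantial case is $A\sqcup B$ closed (and compact in part (II)). Here I pick an open set $V\supseteq A\sqcup B$: take $V=X$ in part (I), and in part (II) use Lemma \ref{easyLeLC} to choose a bounded open $V$. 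I then apply (b) twice inside $V$, once with $C=A$ and once with $C=A\sqcup B$, and split $V\setminus A = B\sqcup(V\setminus(A\sqcup B))$ via open-set additivity. Equating the two expressions for $\nu(V)$ produces
\[
\nu(A\sqcup B) + \nu(V\setminus(A\sqcup B)) \;=\; \nu(A) + \nu(B) + \nu(V\setminus(A\sqcup B)),
\]
and cancelling the common term finishes the argument.

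The main obstacle is justifying the cancellation in part (II), where $\nu$ may take the value $\infty$. In part (I) it is automatic, because $\nu(X)<\infty$ (on a compact space a deficient topological measure is real-valued). For part (II) I would split on $\nu(A\sqcup B)$: when it is finite, outer regularity at $A\sqcup B$ lets me shrink $V$ so that $\nu(V)<\infty$, hence $\nu(V\setminus(A\sqcup B))<\infty$, and the cancellation is legitimate; when $\nu(A\sqcup B)=\infty$, I need to check that $\nu(A)+\nu(B)=\infty$ as well, so that (TM1) reads $\infty=\infty$. For this last step I would argue that if both $\nu(A)$ and $\nu(B)$ were finite, then outer regularity at $A$ combined with (b) and the same open-additivity decomposition would yield an open neighborhood of $A\sqcup B$ with finite $\nu$-measure, contradicting the outer-regular equality $\nu(A\sqcup B)=\infty$.
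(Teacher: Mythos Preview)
Your overall scheme matches the paper's strategy for Part~(I), but there are two issues worth flagging.

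\medskip
\textbf{Part (I), the subcase $A\sqcup B$ open.} You write that ``(b) with $C=A$ and $U=A\sqcup B$'' finishes this case, but Part~(I)'s hypothesis (b) is only $\nu(X)=\nu(C)+\nu(X\setminus C)$ for $C\in\cx$ --- there is no free open set $U$. Your $V=X$ computation only applies as written when $A\sqcup B$ is \emph{closed} (you need $A\sqcup B\in\cx$ to invoke (b) with $C=A\sqcup B$). The paper closes this gap by observing that, after possibly swapping $A$ and $B$, one may assume $A$ and $A\sqcup B$ are of different types; then $A$ and $X\setminus(A\sqcup B)$ are of the \emph{same} type, so closed- or open-set additivity gives $\nu(A)+\nu(X\setminus(A\sqcup B))=\nu(X\setminus B)$, and two applications of (b) finish. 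This is exactly your $V=X$ idea, reorganized so that the two sets fed to (b) are always closed.

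\medskip
\textbf{Part (II), the subcase $A\sqcup B$ compact.} Here the paper simply cites an external characterization (Proposition~37 in \cite{Butler:TMLCconstr}), so your direct argument is a genuine contribution. However, you overlooked a simplification that dissolves your ``main obstacle'': if $A\in\kx$, $B\in\ox$ and $A\sqcup B\in\kx$, then $B=(A\sqcup B)\setminus A$ is closed in a compact set, hence compact, and (DTM1) gives $\nu(A\sqcup B)=\nu(A)+\nu(B)$ immediately. No auxiliary $V$, no $\infty$-case analysis. Your sketched treatment of $\nu(A\sqcup B)=\infty$ is in fact incomplete as stated: producing an open neighborhood of $A\sqcup B$ with finite $\nu$-measure from $\nu(A),\nu(B)<\infty$ would require subadditivity on open sets, which a deficient topological measure need not have. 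The argument can be repaired, but only by first observing that $B$ is closed --- at which point the whole subcase has already trivialized.
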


\begin{proof}
\begin{enumerate}[label=(\Roman*),ref=(\Roman*)]
\item
(a) $\Longrightarrow$ (b) follows from the definition of a topological measure on a compact space. 
Superadditivity of a deficient topological measure implies equivalence of (b) and (c). 
Since a deficient topological measure is finitely additive on closed 
and open sets (see Lemma \ref{KOfiniteaddDTM}), to show that (b) implies (a) it is enough to check that 
$\nu(A \sc B) = \nu (A) + \nu(B)$ when $A, B$ are not both closed
or both open. We may also assume that $A$ and $A\sqcup B$ are not
both closed or both open. Then the sets
$A$ and $X\setminus (A\sqcup B)$ \emph{are} both closed or both open, and disjoint.
By additivity on open (or closed) sets
$$ \nu(A)+\nu(X\setminus (A\sqcup B))=\nu \left(A\sqcup  [X\setminus (A\sqcup B)] \right) =
\nu(X\setminus B), $$
so by (b) 
$$ \nu(A)+\nu(X) - \nu(A \sc B)=\nu (X) - \nu(B), $$
i.e.
$$ \nu(A) + \nu(B) = \nu(A \sc B).$$
\item
(a) $\Longrightarrow$ (b) follows from the definition of a topological measure on a locally compact space. 
Superadditivity of a deficient topological measure gives equivalence of (b) and (c). Finite additivity of a deficient 
topological measure on open sets  (see Lemma \ref{KOfiniteaddDTM}) and Proposition 37 in 
\cite{Butler:TMLCconstr} show that (b) implies (a).
\end{enumerate}
\end{proof}

Now we are interested in finding out when a deficient topological measure is a measure.  

\begin{definition} \label{Defmu*}
For a deficient topological measure $\mu$ on a locally compact space $X$ define 
a set function $\mu^*: \calP (X) \longrightarrow [0, \infty]$ as follows:
$$ \mu^*(E) = \inf \{ \mu(U): \ E \se U, \ U \in \ox \}.  $$
for any subset $E$ of $X$.
\end{definition}

Note that for any open set $ \mu^*(U) = \mu(U)$.

\begin{lemma} \label{outerM}
Let $\mu$ be a deficient topological measure on a locally compact space $X$ which is finitely subadditive on open sets, 
i.e. if $U, V$ are open subsets of $X$,  then $\mu(U \cup V) \le \mu(U) + \mu(V)$. Let $\mu^*$ be as in 
Definition \ref{Defmu*}.
Then 
\begin{enumerate}[label=(\roman*),ref=(\roman*)]
\item
$\mu^*$ is an outer measure on $X$.
\item
A subset $E$ of $X$ is  $\mu^*$-measurable iff for each open set $U$.
\begin{align} \label{onlyforop}
\mu(U) \ge \mu^*(U \cap E ) + \mu^*(U \sm E).
\end{align}
\item
Each closed subset of $X$ is $\mu^*$-measurable. 
\end{enumerate}
\end{lemma}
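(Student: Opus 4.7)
For part (i), I would verify the three axioms of an outer measure in turn. The normalization $\mu^*(\emptyset)=0$ is immediate since $\emptyset\in\ox$ and $\mu(\emptyset)=0$ by Remark \ref{DTMagree}, and monotonicity follows directly from the definition as an infimum. The work is in countable subadditivity: given $E=\bigcup_{n\in\N}E_n$ with each $\mu^*(E_n)<\infty$ (else the inequality is trivial) and $\eps>0$, pick open sets $U_n\supseteq E_n$ with $\mu(U_n)\le\mu^*(E_n)+\eps/2^n$ and set $U=\bigcup_n U_n$. Writing $V_N=\bigcup_{n\le N}U_n\nearrow U$, the hypothesis of finite subadditivity together with induction gives $\mu(V_N)\le\sum_{n\le N}\mu(U_n)$, and the $\tau$-smoothness on open sets from Lemma \ref{opaddDTM} upgrades this to $\mu(U)=\lim_N\mu(V_N)\le\sum_n\mu(U_n)\le\sum_n\mu^*(E_n)+\eps$. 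Since $E\subseteq U$ is open, $\mu^*(E)\le\mu(U)$ and letting $\eps\to0$ closes the argument.

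For part (ii), necessity is the usual definition of Carath\'eodory measurability restricted to open test sets. For sufficiency, I start with an arbitrary subset $A\se X$, may assume $\mu^*(A)<\infty$, and for $\eps>0$ pick an open $U\supseteq A$ with $\mu(U)\le\mu^*(A)+\eps$. Since $U\cap E\supseteq A\cap E$ and $U\sm E\supseteq A\sm E$, monotonicity of $\mu^*$ combined with the hypothesis (\ref{onlyforop}) applied to the open set $U$ yields
\[
\mu^*(A)+\eps\;\ge\;\mu(U)\;\ge\;\mu^*(U\cap E)+\mu^*(U\sm E)\;\ge\;\mu^*(A\cap E)+\mu^*(A\sm E),
\]
and letting $\eps\to 0$ gives the general Carath\'eodory condition.

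For part (iii), I fix a closed set $F$ and an open set $U$ and, by part (ii), need only check (\ref{onlyforop}). The case $\mu(U)=\infty$ is trivial, so assume $\mu(U)<\infty$; then $\mu(U\sm F)<\infty$ too by monotonicity. The main idea is to use the inner compact regularity of $\mu$ on the open set $U\sm F$: given $\eps>0$ choose compact $K\se U\sm F$ with $\mu(K)>\mu(U\sm F)-\eps$. Then $U\sm K$ is an open set containing $U\cap F$, so $\mu^*(U\cap F)\le\mu(U\sm K)$. The compact set $K$ and the open set $U\sm K$ are disjoint and both contained in $U$, with $U\sm K\in\ox$ and $K\in\kx$; superadditivity of $\mu$ from Lemma \ref{nunupl} therefore gives $\mu(U)\ge\mu(K)+\mu(U\sm K)$, so
\[
\mu(U)\;\ge\;\bigl(\mu(U\sm F)-\eps\bigr)+\mu^*(U\cap F)\;=\;\mu^*(U\sm F)+\mu^*(U\cap F)-\eps,
\]
using $\mu^*=\mu$ on open sets. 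Sending $\eps\to0$ verifies (\ref{onlyforop}) and hence the $\mu^*$-measurability of $F$.

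The main obstacle is in (iii): the set $U\cap F$ is neither open nor closed in general, so to handle it one has to convert the problem to one about the open set $U\sm F$ via inner compact regularity, and then invoke superadditivity of deficient topological measures (which operates precisely on mixed open/compact decompositions) to recover the needed inequality from below.
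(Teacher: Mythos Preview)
Your proof is correct and follows essentially the same route as the paper's own argument: the same $\eps/2^n$ covers plus $\tau$-smoothness for (i), the same descent from an open cover $U\supseteq A$ via monotonicity for (ii), and the same inner-compact-regularity trick on $U\setminus F$ for (iii). The only cosmetic difference is that in (iii) you invoke the superadditivity inequality $\mu(K)+\mu(U\setminus K)\le\mu(U)$ directly from Lemma~\ref{nunupl}, whereas the paper re-derives this inequality on the spot by an inner double supremum over compacts $C\subseteq U\setminus F$ and $K\subseteq U\setminus C$; this is a mild streamlining, not a genuinely different approach.
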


\begin{proof}
\begin{enumerate}[label=(\roman*),ref=(\roman*)]
\item
Clearly, $\mu^*(\O) = 0, \  \mu^*$ is monotone and finitely subadditive. 
To show that $\mu^*$  is an outer measure, we only need to check that it is countably subadditive.
From finite subadditivity of $\mu$ and Lemma \ref{opaddDTM} it
is easy to see that $\mu$ is countably subadditive on open sets. 
We shall show countable additivity of $\mu^*$. By monotonicity we may assume $\nu^*(E_i) < \infty$ for each $i$.  
For arbitrary$\eps>0$ and sets 
$E_1, E_2, \ldots$ choose open sets $U_1, U_2, \ldots$ such that  $E_i \se U_i$ and 
$\mu(U_i) -\mu^*(E_i) < \eps / 2^i$ for $i=1,2, \ldots$.
Then
\begin{align*}
\mu^*(\bigcup_{i=1}^{\infty} E_i) &\le \mu(\bigcup_{i=1}^{\infty} U_i) \le 
\sum_{i=1}^{\infty} \mu (U_i) \le \sum_{i=1}^{\infty} \mu^* (E_i)  + \eps, 
\end{align*}
so $\mu^*$ is countably subadditive. 
\item
Assume  (\ref{onlyforop}) holds. 
The  proof is basically as in \cite[p.234, Theorem D]{Halmos} and proceeds as follows.
Let $A$ be an arbitrary set, and take
any open $U$ containing $A$.  Since
\begin{align*}
\mu(U) &= \mu^*(U) \ge \mu^*(U \cap E) + \mu^*(U \sm E) \\
&\ge  \mu^*(A \cap E) + \mu^*(A \sm E),
\end{align*}
taking infimum over all open $U$ containing $A$ we see that
\[ \mu^*(A) \ge  \mu^*(A \cap E) + \mu^*(A \sm E).\] 
Since $ \mu^*$ is also subadditive, $E$ is $ \mu^*$-measurable.
\item
The argument is essentially as in \cite[p.235, Theorem E]{Halmos} and proceeds as follows.
Let $F$ be closed. We need to show that  $\mu (U) = \mu^*(U)  \ge \mu^*(U \cap F) + \mu^*(U \sm F) $
for any open set $U$, 
and it is enough to consider the case $ \mu^*(U),  \mu^*(U \cap F), \mu^*(U \sm F) < \infty$.
Let $U$ be open. Let $C$ be a compact subset of an open set
$U \sm F$, and let $K$ be a compact subset of  an open set $U \sm C$.
Then $C \sc K \se U$ and  $\mu^*(U) = \mu(U) \ge \mu(C) + \mu(K)$.
Taking supremum over all compacts $K  \se U \sm C$  we see that  
\[ \mu^*(U) \ge \mu(C) + \mu(U \sm C) = \mu(C) + \mu^*(U \sm C) \ge \mu(C) + \mu^*(U \cap F). \]
Taking supremum over all compacts $C \se U \sm F$ we have:
\[ \mu^*(U) \ge  \mu(U \sm F) + \mu^*(U \cap F) =  \mu^*(U \sm F) + \mu^*(U \cap F), \]
which finishes the proof of the statement.
\end{enumerate}
\end{proof}

\begin{theorem} \label{extToMe}
Let $\mu$ be a deficient topological measure on a locally compact space $X$ which is finitely subadditive on open sets, 
i.e. if $U, V$ are open subsets of $X$,  then $\mu(U \cup V) \le \mu(U) + \mu(V)$. Then
$\mu$ admits a unique extension to an inner regular on open sets, outer regular Borel measure 
$m$ on the Borel $\sigma$-algebra of subsets of $X$. 
$m$ is a Radon measure iff $\mu$ is compact-finite. 
If $\mu$ is finite then $m$ is a regular Borel measure.
\end{theorem}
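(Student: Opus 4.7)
The plan is to build the extension $m$ via the outer measure $\mu^{*}$ from Definition~\ref{Defmu*} and then invoke the machinery already set up in Lemma~\ref{outerM}. First I would apply Lemma~\ref{outerM} directly: under the finite subadditivity hypothesis, $\mu^{*}$ is an outer measure and every closed subset of $X$ is $\mu^{*}$-measurable. By Carath\'eodory's theorem, the $\mu^{*}$-measurable sets form a $\sigma$-algebra on which $\mu^{*}$ is a complete measure; since this $\sigma$-algebra contains all closed sets, it contains the Borel $\sigma$-algebra. Define $m$ as the restriction of $\mu^{*}$ to the Borel sets.

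Next I would check that $m$ extends $\mu$. For an open set $U$, $m(U)=\mu^{*}(U)=\mu(U)$ is immediate from the remark after Definition~\ref{Defmu*}. For a closed set $F$, outer regularity of $\mu$ (condition \ref{DTM3}) gives
\[
m(F)=\mu^{*}(F)=\inf\{\mu(U):F\subseteq U,\,U\in\ox\}=\mu(F).
\]
Outer regularity of $m$ on arbitrary Borel sets is built into the definition of $\mu^{*}$. Inner regularity on open sets follows from \ref{DTM2}, since $m(U)=\mu(U)=\sup\{\mu(K):K\subseteq U,\,K\in\kx\}=\sup\{m(K):K\subseteq U,\,K\in\kx\}$.

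For uniqueness, suppose $m'$ is another Borel measure extending $\mu$ which is inner regular on open sets and outer regular on the Borel $\sigma$-algebra. Then $m'$ and $m$ agree on $\ox\cup\cx$ by hypothesis, and for any Borel $E$ outer regularity forces
\[
m'(E)=\inf\{m'(U):E\subseteq U,\,U\in\ox\}=\inf\{m(U):E\subseteq U,\,U\in\ox\}=m(E).
\]
The Radon claim is now routine: $m$ is a Radon measure iff $m(K)<\infty$ for every compact $K$, and since $m(K)=\mu(K)$, this is exactly compact-finiteness of $\mu$. Finally, if $\mu$ is finite then $m(X)=\mu(X)<\infty$, so inner regularity on open sets upgrades to inner regularity on all Borel sets by the standard finite-measure argument: given a Borel $E$ and $\eps>0$, use outer regularity to pick open $U\supseteq X\setminus E$ with $m(U)<m(X\setminus E)+\eps$, then use inner regularity on the open set $X$ to find compact $K\subseteq X$ with $m(X)-m(K)<\eps$; the compact set $K\setminus U\subseteq E$ (which is closed, hence $\mu^{*}$-measurable, and is contained in a compact set) satisfies $m(E)-m(K\setminus U)<2\eps$. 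The main obstacle I expect is this last step—transferring inner regularity from open sets to all Borel sets in the finite case—since it requires using both the measurability of closed sets and a careful choice of compact approximants within $E$ rather than in $X$ as a whole; but with finiteness and the already-established regularity properties this is a standard Radon-measure argument.
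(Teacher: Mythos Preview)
Your proposal is correct and follows essentially the same approach as the paper: build $\mu^{*}$, invoke Lemma~\ref{outerM} and Carath\'eodory, restrict to the Borel $\sigma$-algebra, and read off the regularity properties and uniqueness from the definition of $\mu^{*}$. Your treatment of uniqueness and of the finite-case inner-regularity upgrade is in fact more explicit than the paper's, which simply asserts that for a finite measure outer regularity is equivalent to inner regularity.
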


\begin{proof}
By Lemma \ref{outerM}, $\mu^*$ is an outer measure. 
The collection $\m$ of all $\mu^*$-measurable sets
is a $\sigma$-algebra, and the restriciton of $\mu^*$ to $\m$ is a measure.
By Lemma \ref{outerM}, the  $\sigma$-algebra $\m$ contains the Borel $\sigma$-algebra $\b(X)$.
Let $m$ be the restriction of $\mu^*$ to $\b(X)$. Thus, $m$ is a Borel measure. 
For each Borel set $E$
\begin{align*}
m(E) &= \mu^*(E) = \inf \{ \mu(U) : E \se U, \ U \in \ox\} \\
&= \inf \{ \mu^*(U) : E \se U, \ U \in \ox\} = \inf \{ m (U) : E \se U, \ U \in \ox\},
\end{align*}
showing that $m$ is outer regular, and this gives 
the uniqueness of extension of $\mu$ to an outer regular Borel measure on $\b(X)$.
 
For a closed set $F$  we see that  $ m(F) = \inf \{ \mu(U) : F \se U, \ U \in \ox\} = \mu(F)$. 
For an open set $U$ we have $m(U) = \mu^*(U) = \mu(U)$. 
Thus, $ \mu = m $ on $ \ox \cup \cx$. Also, $m$ is inner regular on open sets.

We see that $\mu$ is compact-finite iff $m$ is. Thus, $ m$ is a Radon measure iff $\mu$ is compact-finite.
If $\mu$ is finite, it is easy to show that outer regularity of $m$
is equivalent to inner regularity of $m$, hence, $m$ is a regular Borel measure.
\end{proof}

\begin{remark}
This proof is very similar to the proof in \cite{Halmos} that a content 
on a locally compact space extends to a compact regular Borel measure.  
The reason is that a deficient topological measure that is subadditive and finite on compact sets is a content 
in the classical definition (see \cite{Halmos}, $\S$ 53). 
The proof in the case where $X$ is compact, $\mu$ is a 
topological measure, and the extension is to the Borel algebra was first given by Wheeler in \cite{Wheeler}; 
the proof in the case where $X$ is compact and
$\mu$ is a  deficient topological measure was first given by Svistula in \cite{Svistula:Signed}.
If $\mu$ is a compact-finite deficient topological measure on a locally compact space, the existence of a unique extension
of $\mu$ to an inner regular on open sets, outer regular Borel measure 
$m$ on the Borel $\sigma$-algebra of subsets of $X$ follows also from Theorem 7.11.1 in \cite{Bogachev}, vol.2.
Note that for a locally compact space $X$ we no longer require 
the deficient topological measure to be real-valued on compacts. 
See Example \ref{discrDTM}, part \ref{semifinDTM}.
\end{remark}

We are now ready to present necessary 
and sufficient conditions for a deficient topological measure to be a measure.

\begin{theorem} \label{subaddit}
Let $\mu$ be a deficient topological measure on a locally compact space $X$. 
The following are equivalent: 
\begin{itemize}
\item[(a)]
If $C, K$ are compact subsets of $X$, then $\mu(C \cup K ) \le \mu(C) + \mu(K)$.
\item[(b)]
If $U, V$ are open subsets of $X$,  then $\mu(U \cup V) \le \mu(U) + \mu(V)$.
\item[(c)]
$\mu$ admits a unique extension to an inner regular on open sets, outer regular Borel measure 
$m$ on the Borel $\sigma$-algebra of subsets of $X$. 
$m$ is a Radon measure iff $\mu$ is compact-finite. 
If $\mu$ is finite then $m$ is a regular Borel measure.
\end{itemize}
\end{theorem}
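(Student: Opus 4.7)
The plan is to close a triangle: the implication (b)$\Rightarrow$(c) is already Theorem \ref{extToMe}, and the implication (c)$\Rightarrow$(b) is immediate since any Borel measure $m$ satisfies $m(U\cup V)\le m(U)+m(V)$, and $\mu$ agrees with $m$ on $\ox$ by outer regularity. Therefore the only new content is the equivalence (a)$\Leftrightarrow$(b), after which the whole chain closes.

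For (a)$\Rightarrow$(b), I would fix open sets $U,V$ and use inner compact regularity on open sets (property \ref{DTM2}) together with Lemma \ref{HalmEzLe}. Given a compact set $C\subseteq U\cup V$, the lemma produces compact sets $K\subseteq U$ and $D\subseteq V$ with $C=K\cup D$. Then (a) and the monotonicity of $\mu$ (Remark \ref{DTMagree}) give
\[
\mu(C)\le\mu(K)+\mu(D)\le\mu(U)+\mu(V).
\]
Taking the supremum over all such $C$ and applying \ref{DTM2} to $U\cup V$ yields $\mu(U\cup V)\le\mu(U)+\mu(V)$.

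For (b)$\Rightarrow$(a), I would use outer regularity \ref{DTM3}. If $\mu(C)=\infty$ or $\mu(K)=\infty$ the inequality is trivial, so assume both are finite. For $\eps>0$ pick open sets $U\supseteq C$, $V\supseteq K$ with $\mu(U)<\mu(C)+\eps$ and $\mu(V)<\mu(K)+\eps$. Since $C\cup K\subseteq U\cup V$, monotonicity and (b) give
\[
\mu(C\cup K)\le\mu(U\cup V)\le\mu(U)+\mu(V)<\mu(C)+\mu(K)+2\eps,
\]
and letting $\eps\to 0$ proves (a).

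There is no real obstacle here; the only subtlety is handling the case where $\mu$ may take the value $\infty$, which is dispatched by the trivial monotonicity argument above. The key inputs are Lemma \ref{HalmEzLe} to split a compact subset of $U\cup V$, and the inner/outer regularity axioms \ref{DTM2}, \ref{DTM3} to transfer subadditivity between the compact and open worlds. With (a)$\Leftrightarrow$(b) established, Theorem \ref{extToMe} delivers (c), finishing the proof.
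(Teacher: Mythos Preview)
Your proof is correct and follows essentially the same approach as the paper: the implication (a)$\Rightarrow$(b) via Lemma \ref{HalmEzLe} and inner regularity, and (b)$\Rightarrow$(c) via Theorem \ref{extToMe}, are identical to the paper's arguments. The only structural difference is in closing the cycle: the paper goes (c)$\Rightarrow$(a) directly using the extended measure $m$ (writing $m(C\cup K)=m(C)+m(K\setminus C)\le m(C)+m(K)$), whereas you instead prove (b)$\Rightarrow$(a) by an elementary outer-regularity argument and note (c)$\Rightarrow$(b) is trivial; both routes are equally short and valid.
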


\begin{proof}
$(a)  \Longrightarrow (b)$. 
Let $U, V$ be open in $X$, and let $D \se U \cup V, \, D \in \kx$.
By Lemma \ref{HalmEzLe} write $D= C \cup K, \, C \se U, \, K \se V$, where $C,K \in \kx$.
Since 
\[ \mu(D) = \mu(C \cup K)  \le \mu(C) + \mu(K) \le \mu(U) + \mu(V), \]
taking supremum over all compacts $D$ we have $\mu(U \cup V) \le \mu(U) + \mu(V)$. \\
$(b) \Longrightarrow (c)$. 
This is Lemma \ref{extToMe}. \\
$(c) \Longrightarrow (a)$.
$\mu(C \cup K) = m (C \cup K) = m(C) + m(K \sm C) \le  m(C) + m(K) =  \mu(C) + \mu(K)$.
Note that $K \sm C$ is neither open nor closed in general, so $\mu(K \sm C)$ need not be defined.
\end{proof}

\section{New deficient topological measures} \label{SnewDTM}

In this section we gives some methods for obtaining new deficient topological measures from known ones.

\begin{proposition}  \label{finvDTM}
Let $X$ be a compact space and $Y$ be a locally compact space.
If $\nu$ is a deficient topological measure on $X$ and $f : X \rightarrow Y$ is continuous
then the set function $ \nu \circ f^{-1}$ on $\calO(Y)\cup \calC(Y)$ is a deficient topological measure on $Y$.
If $\nu$ is a topological measure on $X$, then $ \nu \circ f^{-1}$ is a topological measure on $Y$.
\end{proposition}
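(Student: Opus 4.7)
The plan is to verify that $\mu := \nu \circ f^{-1}$ satisfies the three axioms \ref{DTM1}, \ref{DTM2}, \ref{DTM3} of Definition \ref{DTM}, then upgrade to the topological measure case via Theorem \ref{DTMtoTM}. Note first that $\mu$ is well-defined on $\calO(Y)\cup\calC(Y)$: preimages of open sets are open, preimages of closed sets are closed (hence compact in $X$ since $X$ is compact), so $\nu$ applies to $f^{-1}(U)$ and $f^{-1}(F)$. Also $\mu(\emptyset)=0$ so $\mu$ is not identically $\infty$, and monotonicity of $\mu$ follows from monotonicity of $\nu$ (Remark \ref{DTMagree}). Property \ref{DTM1} is routine: if $C,K\in\calK(Y)$ are disjoint then $f^{-1}(C),f^{-1}(K)\in\calK(X)$ are disjoint, and $f^{-1}(C\sc K)=f^{-1}(C)\sc f^{-1}(K)$, so finite additivity of $\nu$ on compact sets transfers directly.

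For the inner compact regularity \ref{DTM2}, let $U\in\calO(Y)$. Inner compact regularity of $\nu$ on the open set $f^{-1}(U)$ gives
\[
\mu(U)=\nu(f^{-1}(U))=\sup\{\nu(D):D\se f^{-1}(U),\,D\in\calK(X)\}.
\]
For each such $D$, the set $f(D)$ is a compact subset of $U$, and $D\se f^{-1}(f(D))$, so $\nu(D)\le\nu(f^{-1}(f(D)))=\mu(f(D))$. Hence $\mu(U)\le\sup\{\mu(K):K\se U,\,K\in\calK(Y)\}$, and the reverse inequality holds by monotonicity.

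The main obstacle is outer regularity \ref{DTM3}, because an open neighborhood of $f^{-1}(F)$ in $X$ is a priori not of the form $f^{-1}(V)$. The key move is to exploit the compactness of $X$ together with the fact that $Y$ is Hausdorff. Fix $F\in\calC(Y)$ and $\eps>0$. By outer regularity of $\nu$, pick $W\in\calO(X)$ with $f^{-1}(F)\se W$ and $\nu(W)<\nu(f^{-1}(F))+\eps=\mu(F)+\eps$. Then $X\sm W$ is closed in the compact space $X$, hence compact, so $f(X\sm W)$ is compact and therefore closed in $Y$. If $y\in F\cap f(X\sm W)$, any preimage $x\in X\sm W$ with $f(x)=y$ would lie in $f^{-1}(F)\se W$, a contradiction; thus $F\cap f(X\sm W)=\O$. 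Set $V:=Y\sm f(X\sm W)\in\calO(Y)$; then $F\se V$ and $f^{-1}(V)=X\sm f^{-1}(f(X\sm W))\se W$. Monotonicity of $\nu$ yields $\mu(V)=\nu(f^{-1}(V))\le\nu(W)<\mu(F)+\eps$, proving outer regularity. The reverse inequality $\mu(F)\le\inf\{\mu(V)\}$ is immediate from monotonicity.

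For the topological measure claim, suppose $\nu\in\TM(X)$. By part (II) of Theorem \ref{DTMtoTM} it suffices to check $\mu(U)=\mu(C)+\mu(U\sm C)$ for any $C\in\calK(Y),\,U\in\calO(Y),\,C\se U$. Since $Y$ is Hausdorff the set $U\sm C$ is open in $Y$; and $f^{-1}(U\sm C)=f^{-1}(U)\sm f^{-1}(C)$ is open in $X$, with $f^{-1}(C)\in\calK(X)=\calC(X)$ contained in $f^{-1}(U)\in\calO(X)$. Applying part (II) of Theorem \ref{DTMtoTM} to the topological measure $\nu$ gives $\nu(f^{-1}(U))=\nu(f^{-1}(C))+\nu(f^{-1}(U)\sm f^{-1}(C))$, which pushes forward to the required equality for $\mu$.
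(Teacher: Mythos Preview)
Your proof is correct and follows essentially the same route as the paper: the inner regularity argument via pushing forward compacts $D\mapsto f(D)$, the outer regularity argument via the open set $V=Y\sm f(X\sm W)$, and the appeal to Theorem \ref{DTMtoTM} for the topological measure upgrade are all identical in spirit to the paper's proof. The only minor difference is that you verify \ref{DTM1} directly (using that $f^{-1}(C),f^{-1}(K)$ are compact in $X$), whereas the paper checks finite additivity on open sets first and then invokes Lemma \ref{KOfiniteaddDTM}; your shortcut is slightly cleaner. One small omission: in the outer regularity step you implicitly assume $\mu(F)<\infty$ when choosing $W$, but the case $\mu(F)=\infty$ is trivial by monotonicity, as the paper notes.
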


\begin{proof}
First, we shall show that  $\nu \circ f^{-1}$ is inner regular.
Let $U$ be open in $Y$. If  $(\nu \circ f^{-1}) (U) = \infty$, for each $n \in \N$ find a compact in $X$ set $K_n \se  f^{-1} (U)$ such that
$\nu(K_n) > n$. Then for the set $C_n = f(K_n)$, compact in $Y$, we have: $C_n \se U, \, K_n \se f^{-1}(C_n)$, and 
$(\nu \circ f^{-1}) (C_n) \ge \nu(K_n) > n$, so 
$ \sup\{ \nu \circ f^{-1} (C) : \ C \se U, \ C \in \calK(Y) \}  = \infty = \nu \circ f^{-1} (U)$.
If  $(\nu \circ f^{-1}) (U) < \infty$, for an arbitrary $ \eps >0$  choose a compact $K \se  f^{-1} (U)$ such that
$(\nu \circ f^{-1}) (U) - \nu(K) < \eps$. For compact set $C = f(K)$ we have:
$C \se U, \, K \se f^{-1}(C)$ and $(\nu \circ f^{-1}) (U) - (\nu \circ f^{-1}) (C) \le (\nu \circ f^{-1}) (U) - \nu(K) < \eps$, showing again 
that $ \sup\{ \nu \circ f^{-1} (C) : \ C \se U, \ C \in \calK(Y) \} = \nu \circ f^{-1} (U)$.

Now we shall show the outer regularity. Let $F$ be closed in $Y$. 
By monotonicity of $ \nu$ it is enough to assume that $(\nu \circ f^{-1}) (F) < \infty$.
For $ \eps >$ choose $W \in \ox$ such that $ f^{-1}(F) \se W$ and $  \nu(W) < (\nu \circ f^{-1}) (F) + \eps$. 
The set $Y \sm f(X \sm W)$ is open in $Y$ and contains $F$.
Also, $f^{-1} (F) \se f^{-1} (Y \sm f(X \sm W))  \se W$, so 
$ (\nu \circ f^{-1}) (Y \sm f(X \sm W)) \le  \nu(W) < (\nu \circ f^{-1}) (F) + \eps$,
showing that $ \inf \{ \nu \circ f^{-1} (V) : \ F \se V, \ V \in \calO(Y)\} = (\nu \circ f^{-1}) (F)$. 

It is easy to see the finite additivity of $\nu \circ f^{-1}$ on open sets. By Lemma \ref{KOfiniteaddDTM}, $\nu \circ f^{-1}$
is then finitely additive on compact sets. Thus, $\nu \circ f^{-1}$ is a deficient topological measure on $Y$.   

If $\nu$ is a topological measure on $X$, using Lemma \ref{DTMtoTM} it is easy to see that $\nu \circ f^{-1}$ is
in fact a topological measure on $Y$. 
\end{proof}

\begin{remark}
Proposition \ref{finvDTM} holds for a locally compact space $X$ if we require in addition $f$ to be a closed mapping.
\end{remark} 

%\begin{definition}
%We call $ \nu \circ f^{-1}$ in Theorem \ref{finvDTM} the image of $\nu$ under $f$.
%\end{definition}

\begin{remark}
Proposition \ref{finvDTM} was first proved for both spaces compact and $ \nu$ finite in \cite{OrjanAlf:CostrPropQlf}.
\end{remark}

If $\mu$ is a deficient topological measure on $X$, and $ A \in \ox \cup \cx$, 
one can not obtain a deficient topological measure on $A$  by simply 
restricting $\mu$ to $A$, i.e., considering $\mu(A \cap B), \ B \in \ox \cup \cx$. One simple reason for this is that 
the intersection of two arbitrary sets from $ \ox \cup \kx$ does not in general belong to $ \ox \cup \kx$.   
Nevertheless, we may obtain a deficient topological measures on closed and open sets as restrictions.

Let $X$ be a locally  compact space, and $F$ be a closed subset. Consider $F$ as a space with the subspace topology.
Note that open subsets of $F$ are described as follows: 
$$ \of = \{ U \cap F: \ \ U \in \ox \},  $$
closed subsets of $F$ have the form $F \cap G$ where $ G \in \cx$, a compact in $F$ is also a compact in $X$, and so
compact subsets of $F$ are
$$ \kf = \{ C \in \kx: \ C \se F \}$$

\begin{proposition} \label{restrDTM}
Let $X$ be locally compact,  and $\nu$ be a deficient topological measure on $X$.
Let $F$ be a closed subset equipped with the subspace topology.
Then there exists a deficient topological measure $\nu_F$ on $F$ with the subspace topology 
such that for any $A \in \of \cup \kf$ 
$$ \nu_F(A) = \sup\{ \nu(K): \ K \se A, \ K \in \kf \}.$$
\end{proposition}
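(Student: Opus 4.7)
The plan is to define $\nu_F$ first on $\kf \cup \of$ by the prescribed supremum formula, and then extend it to arbitrary closed subsets of $F$ by outer regularity within $F$. Since $\kf \se \kx$, for $K \in \kf$ the supremum is attained at $D = K$ (and bounded above by $\nu(K)$ via monotonicity of $\nu$), so $\nu_F(K) = \nu(K)$. In particular, \ref{DTM1} for $\nu_F$ on $\kf$ reduces at once to finite additivity of $\nu$ on $\kx$, and \ref{DTM2} is the definition of $\nu_F$ on $\of$.

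Next I would extend $\nu_F$ to arbitrary $G \in \calC(F)$ by
$$ \nu_F(G) = \inf\{\nu_F(U) : G \se U,\ U \in \of\}, $$
which makes \ref{DTM3} automatic. The only point requiring verification is that this extension is \emph{consistent} with the supremum formula on compacts: for $K \in \kf$ one must check that
$$ \nu(K) = \inf\{\nu_F(U) : K \se U,\ U \in \of\}. $$
The inequality $\le$ is immediate, since $K$ itself is a member of $\kf$ lying in any such $U$, so $\nu_F(U) \ge \nu(K)$. For $\ge$, use outer regularity of $\nu$ on $X$ to choose, for arbitrary $\eps>0$, an open $V \se X$ with $K \se V$ and $\nu(V) < \nu(K) + \eps$; set $U := V \cap F \in \of$. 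Any $D \in \kf$ with $D \se U$ is a compact subset of $V$ lying in $\kx$, so by inner compact regularity of $\nu$ on $V$ we get $\nu_F(U) \le \nu(V) < \nu(K) + \eps$, whence the infimum is $\le \nu(K)$.

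The main (in fact only) obstacle is this consistency check; once it is in hand, $\nu_F$ is well-defined on $\of \cup \calC(F)$ with values in $[0,\infty]$, axioms \ref{DTM1}, \ref{DTM2}, \ref{DTM3} are all in place, and $\nu_F$ is the desired deficient topological measure on $F$. The rest of the argument is routine bookkeeping using monotonicity of $\nu$ and the description of $\of$ and $\kf$ as traces and subfamilies of $\ox$ and $\kx$.
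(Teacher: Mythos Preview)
Your proof is correct and follows essentially the same route as the paper: restrict $\nu$ to $\kf$, extend to $\of$ by inner regularity and to $\calC(F)$ by outer regularity, and verify that the two definitions agree on $\kf$ by passing through a small open neighborhood in $X$ and tracing it onto $F$. The only cosmetic difference is that the paper packages your consistency check as an appeal to Proposition~\ref{reg4ext} (verifying condition~(\ref{regusl}) for the restriction), whereas you carry out that verification directly.
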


\begin{proof}
Define $\nu_F$ on  $\kf$ to be the restriction of $\nu$ to $\kf$. Then $\nu_F$ is finitely additive on $\kf$.
Let $C  \in \kf$ be such that $ \nu_F(C)= \nu(C) < \infty$. 
Since $\nu$ is a deficient topological measure on $X$, it satisfies condition (\ref{regusl}), so
for $\eps >0$ there exists $U \in \ox$ containing $C$ such that for any $K \se U, K \in \kx$ we have 
$\nu(K) \le \nu(C) + \eps$.
Let $V= U \cap F$. Then $V \in \of, \ C \se V$ and for any $D \in \kf$  such that $D \se V$ we have $ D \se U$ and so
$$ \nu_F(D) = \nu(D)  \le \nu(C) + \eps = \nu_F(C) + \eps.$$
Thus, condition (\ref{regusl}) holds for $\nu_F$, and by Proposition \ref{reg4ext} $ \nu_F$ extends to a required 
deficient topological measure.
\end{proof} 
 
\begin{proposition} \label{extenDTM}
Let $\nu$ be a deficient topological measure on $X$, and $ X$ be a closed subset of  $Y$. 
Define $\nu_1$ on open and closed subsets of $Y$ by letting $\nu_1(A) = \nu(A  \cap X)$. 
Then $\nu_1$ is a deficient topological measure on $Y$.
\end{proposition}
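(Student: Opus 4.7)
The plan is to verify the three axioms \ref{DTM1}, \ref{DTM2}, \ref{DTM3} of Definition \ref{DTM} directly. First, I note that $\nu_1$ is well defined: since $X$ is closed in $Y$, for any $U \in \calO(Y)$ the intersection $U \cap X$ is open in the subspace $X$, and for any $F \in \calC(Y)$ the intersection $F \cap X$ is closed in $X$, so in both cases $\nu(A \cap X)$ makes sense. I will also freely use the fact that compactness is absolute: a compact subset of $X$ is compact in $Y$, and a compact subset of $Y$ that sits inside $X$ is compact in $X$.

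For \ref{DTM1}, if $C, K \in \calK(Y)$ are disjoint, then $C \cap X$ and $K \cap X$ are disjoint compact subsets of $X$ (using $X$ closed), and the finite additivity of $\nu$ on compact sets together with $(C \sc K) \cap X = (C \cap X) \sc (K \cap X)$ yields $\nu_1(C \sc K) = \nu_1(C) + \nu_1(K)$.

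For \ref{DTM2}, fix $U \in \calO(Y)$, so $U \cap X \in \calO(X)$. Any $C \in \calK(Y)$ with $C \se U$ gives $C \cap X \in \calK(X)$ with $C \cap X \se U \cap X$, so $\nu_1(C) \le \nu(U \cap X) = \nu_1(U)$ by monotonicity (Remark \ref{DTMagree}). Conversely, by inner compact regularity of $\nu$ applied to $U \cap X$, for any real bound below $\nu_1(U)$ one can find $K \in \calK(X)$ with $K \se U \cap X$ and $\nu(K)$ exceeding that bound; since $K$ is then also compact in $Y$ with $K \se U$, and $\nu_1(K) = \nu(K \cap X) = \nu(K)$, inner regularity in $Y$ follows.

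The step I expect to be the main obstacle is \ref{DTM3}, because subspace-open sets in $X$ need not extend to open sets in $Y$ with the same intersection with $X$ unless one is careful. Here is the key maneuver. Fix $F \in \calC(Y)$; then $F \cap X \in \calC(X)$. For one inequality, any $U \in \calO(Y)$ with $F \se U$ satisfies $F \cap X \se U \cap X \in \calO(X)$, so $\nu_1(F) = \nu(F \cap X) \le \nu(U \cap X) = \nu_1(U)$. For the reverse, given $\eps > 0$, outer regularity of $\nu$ on $F \cap X$ in the space $X$ yields $V \in \calO(X)$ with $F \cap X \se V$ and $\nu(V) < \nu(F \cap X) + \eps$; write $V = W \cap X$ with $W \in \calO(Y)$. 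The trick is to set
\[
U := W \cup (Y \sm X),
\]
which is open in $Y$ since $X$ is closed, contains $F$ (because $F \cap X \se W$ and $F \sm X \se Y \sm X$), and satisfies $U \cap X = W \cap X = V$. Hence $\nu_1(U) = \nu(V) < \nu_1(F) + \eps$, which combined with the previous inequality gives outer regularity. This completes the verification.
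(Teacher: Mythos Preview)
Your proof is correct; the paper itself leaves this as an exercise, and your direct verification of the three axioms is exactly the expected route. The only minor omission is that in the outer regularity step you implicitly assume $\nu_1(F) < \infty$ for the $\eps$-argument, but the case $\nu_1(F) = \infty$ follows immediately from the inequality $\nu_1(F) \le \nu_1(U)$ you already established.
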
 

\begin{proof}
The proof is easy and is left as an exercise.
\end{proof}

\begin{definition}
We will call the deficient topological measure $\nu_F$ in Proposition \ref{restrDTM} the restriction of $\nu$ to $F$,
and the deficient topological measure $\nu_1$ in Proposition \ref{extenDTM} the extension of $\nu$ to $Y$.
\end{definition}

We have an analog of Proposition \ref{restrDTM} for an open set.

\begin{proposition}  \label{restrDTMop}
Let $X$ be locally compact,  and $\nu$ be a deficient topological measure on $X$.
Let $V$ be an open subset equipped with the subspace topology, 
Then there exists a deficient topological measure $\nu_V$ on $V$ with the subspace topology 
such that for any $A \in \ov \cup \kv$ 
$$ \nu_V(A) = \inf \{ \nu(U): \ A \se U, \ U \in \ov \}.$$
\end{proposition}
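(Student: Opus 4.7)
The plan is to apply Proposition \ref{reg4ext} directly on the subspace $V$. Two observations make the setup transparent. First, since $V$ is open in $X$, a set is compact in the subspace topology of $V$ iff it is compact in $X$ and contained in $V$; thus $\kv = \{ K \in \kx : K \se V\} \se \kx$. Second, a set is open in the subspace topology of $V$ iff it is open in $X$ and contained in $V$; thus $\ov = \{ U \in \ox : U \se V\} \se \ox$. In particular, every open (resp.\ compact) set of $V$ is also open (resp.\ compact) in $X$, and the inclusions go only one way — unlike the closed-subspace case of Proposition \ref{restrDTM}, where open sets of $F$ need not be open in $X$.

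First I would set $\nu_V(K) := \nu(K)$ for $K \in \kv$; this inherits finite additivity from $\nu$. To produce the desired extension via Proposition \ref{reg4ext}, I must verify the regularity condition (\ref{regusl}) for $\nu_V$ on $V$. Fix $K \in \kv$ with $\nu_V(K) < \infty$ and $\eps > 0$. Because $\nu$ itself satisfies (\ref{regusl}) on $X$, there is $W \in \ox$ with $K \se W$ such that $\nu(L) \le \nu(K) + \eps$ for every $L \in \kx$ with $L \se W$. Let $U := W \cap V \in \ov$; then $K \se U$, and any $L \in \kv$ with $L \se U$ lies in $W$, so
\[
\nu_V(L) = \nu(L) \le \nu(K) + \eps = \nu_V(K) + \eps .
\]
Proposition \ref{reg4ext} then gives a unique deficient topological measure $\nu_V$ on $V$ extending this compact-set assignment, with $\nu_V = (\nu_V)^+$ on $\ov$.

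Finally I would check the claimed formula. For $A \in \ov$ we trivially have $\inf\{\nu(U): A \se U,\, U\in\ov\} = \nu(A)$ (take $U=A$); on the other hand, $\nu_V(A) = \sup\{\nu(C): C \in \kv,\, C \se A\}$, and since compact subsets of $A \se V$ in $X$ are exactly the elements of $\kv$ contained in $A$, the inner regularity of $\nu$ on $X$ yields $\nu_V(A) = \nu(A)$, as required. For $A = K \in \kv$, the outer regularity of the deficient topological measure $\nu_V$ on $V$ gives $\nu_V(K) = \inf\{\nu_V(U) : K \se U,\, U \in \ov\}$, and by the previous paragraph $\nu_V(U) = \nu(U)$ for every $U \in \ov$, which closes the identification.

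The only nontrivial step is the verification of (\ref{regusl}) on $V$, and that in turn boils down to a single observation: because $V$ is open, $W \cap V$ is itself open in $X$ and so automatically open in $V$. This is precisely why the open-subspace case is, if anything, easier than the closed-subspace case of Proposition \ref{restrDTM}, and the rest of the argument is a direct invocation of Proposition \ref{reg4ext}.
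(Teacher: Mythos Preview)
Your proof is correct, but you take a different route than the paper. The paper defines $\nu_V$ directly on all of $\ov \cup \cv$ by setting $\nu_V(U) = \nu(U)$ for $U \in \ov$ and $\nu_V(F) = \inf\{\nu(U): F \se U,\ U \in \ov\}$ for $F \in \cv$, and then verifies the three DTM axioms by hand; the only nontrivial one is inner regularity, which the paper handles by invoking Lemma~\ref{easyLeLC} to sandwich a compact $K \se U$ inside a bounded open $W \se U$. You instead restrict $\nu$ to $\kv$, verify condition~(\ref{regusl}) on the subspace $V$, and invoke Proposition~\ref{reg4ext} --- exactly the strategy the paper uses for the closed-subspace case in Proposition~\ref{restrDTM}. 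Your approach has the virtue of treating the open- and closed-subspace cases in parallel and of reusing the extension machinery already in place; the paper's direct argument avoids the detour through (\ref{regusl}) and makes explicit that $\nu_V$ agrees with $\nu$ on $\ov$ from the outset. Both are short, and the resulting $\nu_V$ is of course the same.
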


\begin{proof}
Let $V \in \ox$. We consider the space $V$ with the  subspace topology. 
Then $\ov = \{ U \in \ox : \, U \se V\}$.
Define a set function $\nu_V$ on $\ov \cup \cv$ by $\nu_V (U) = \nu(U)$ for $ U \in \ov$ and 
$\nu_V(F) = \inf \{ \nu_V(U):  F \se U, \, U \in \ov \} =  \inf \{ \nu(U):  F \se U, \, U \in \ov \} $ for $F \in \cv$.
Then $ \nu_V$ is finitely additive on $ \ov$ and outer regular.
We need to show the inner regularity of $\nu_V$. Let $ U \in \ov$. 
Suppose first that $ \nu_V(U) = \nu(U) < \infty$. For $ \eps > 0$ pick $K \in \kx, K \se U$ such that 
$\nu(U) - \nu(K) < \eps$. By Lemma \ref{easyLeLC}  let $W$ be a bounded open set such that 
$ K \se W \se U$. Note that  $K \in \kv$ and
\[ \nu_V(U) - \nu_V(K) \le \nu_V(U) - \nu_V(W) = \nu(U) -\nu(W)  \le \nu(U) - \nu(K) < \eps. \] 
Thus, $ \nu_V(U) = \sup\{ \nu_V(K) : \, K \in \kv \}$.

Now suppose  $ \nu_V(U) = \nu(U) = \infty$.  For $n \in \N$ pick $K_n \in \kx, K \se U$ such that $ \nu(K_n) > n$.
Then $K_n \in \kv$. 
For any $W \in \ov$ such that  $ K_n \se W \se U$  we have
$\nu_V(W)  = \nu(W)  \ge \nu(K_n) > n$, and so 
$ \nu_V(K_n) = \inf \{ \nu_V(W): \, K_n \se W \se U, \, W \in \ov \} \ge n$.   
Then $\sup \{ \nu_V(K): \, K \se U, \, K \in \kv \} = \infty = \nu_V(U)$.
\end{proof} 

\begin{definition} \label{dtRestrU}
Let $X$ be locally compact, $\mu$ be a deficient topological measure on $X$.
Let $V \in \ox$. Define a set function $\mu_V$ on $\ox \cup \cx$ by letting
$$ \mu_V ( U ) = \mu(U \cap V), \ U \in \ox, $$
and 
$$ \mu_V (F) = \inf \{ \mu_V (U): \ F \se U, \ U \in \ox \} , \ F \in \cx.$$
\end{definition}

\begin{remark} \label{muVFmu}
If $ F \se V$ then  $\mu_V(F) = \mu(F) $, because
$$ \mu(F) = \inf\{ \mu(U) :\,  F \se U \se V \} = \inf \{ \mu_V(U) : \, F \se U \se V \} = \mu_V(F).$$
\end{remark}

\begin{theorem}  \label{restUdtm}
Let $X$ be locally compact.
Let $\mu$ be a deficient topological measure on $X, \ V \in \ox$. Then $ \mu_V$ defined in Definition \ref{dtRestrU} 
is a deficient topological measure on $X$.
\end{theorem}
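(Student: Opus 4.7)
The plan is to verify the three defining properties of a deficient topological measure for $\mu_V$: outer regularity on closed sets \ref{DTM3}, inner compact regularity on open sets \ref{DTM2}, and finite additivity on compact sets \ref{DTM1}. Outer regularity is immediate from the defining formula in Definition \ref{dtRestrU}, so the work lies in the other two properties.

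To establish inner compact regularity, I would fix an open set $U \in \ox$ and exploit inner regularity of $\mu$ on the open set $U \cap V$. The crucial identification is Remark \ref{muVFmu}: whenever $K \se V$, one has $\mu_V(K) = \mu(K)$. So in the case $\mu_V(U) = \mu(U \cap V) < \infty$, a compact $K \se U \cap V$ with $\mu(K)$ approximating $\mu(U \cap V)$ automatically lies in $U$ and satisfies $\mu_V(K) = \mu(K)$, yielding $\mu_V(U) \le \sup\{\mu_V(K) : K \se U, \ K \in \kx\}$. The case $\mu_V(U) = \infty$ is handled in parallel by choosing compacts $K_n \se U \cap V$ with $\mu(K_n) > n$. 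The reverse inequality is plain monotonicity: for any compact $K \se U$, taking $U$ itself among the opens in the infimum defining $\mu_V(K)$ gives $\mu_V(K) \le \mu_V(U)$.

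For finite additivity on compact sets, I would appeal to Lemma \ref{KOfiniteaddDTM}(iii). Having just verified that $\mu_V$ is inner compact regular on opens and outer regular on closed sets, that lemma reduces \ref{DTM1} to finite additivity on disjoint open sets. If $U_1, U_2 \in \ox$ are disjoint, then $U_1 \cap V$ and $U_2 \cap V$ are also disjoint and open, so by finite additivity of $\mu$ on open sets (Lemma \ref{nunupl}),
\[ \mu_V(U_1 \sc U_2) = \mu((U_1 \sc U_2) \cap V) = \mu(U_1 \cap V) + \mu(U_2 \cap V) = \mu_V(U_1) + \mu_V(U_2). \]

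The only subtle step is the forward direction of inner compact regularity, where one must realize a compact $K$ approximating $\mu(U \cap V)$ from within $U \cap V$ as a compact inside $U$ having the correct $\mu_V$-value; Remark \ref{muVFmu} supplies precisely the identification $\mu_V(K) = \mu(K)$ needed on subsets of $V$. The remaining steps are either direct from the definition or routine invocations of the machinery of Lemmas \ref{nunupl} and \ref{KOfiniteaddDTM} already developed in the paper.
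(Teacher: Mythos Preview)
Your proposal is correct and follows essentially the same approach as the paper: outer regularity is immediate from Definition \ref{dtRestrU}; inner compact regularity is obtained by approximating $\mu(U\cap V)$ from inside by compacts $K\se U\cap V$ and invoking Remark \ref{muVFmu} to identify $\mu_V(K)=\mu(K)$ (with separate treatment of the finite and infinite cases); and finite additivity on compacts is deduced from finite additivity of $\mu$ on opens via Lemma \ref{KOfiniteaddDTM}. Your explicit mention of the reverse inequality $\mu_V(K)\le\mu_V(U)$ via the infimum definition is a small clarification that the paper leaves implicit.
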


\begin{proof}
By its definition, $\mu_V$ is outer regular. By Remark \ref{muVFmu},
if $ C \in \cx, C \se V$ then $\mu_V(C) = \mu(C)$. 
We shall show that $\mu_V$ is inner compact regular. Let $ U \in \ox$. Assume first that  $ \mu_V(U) = \mu(U \cap V) = \infty $.
For $n \in \N$ choose $ K_n \in \kx$ such that $ K_n \se U \cap V, \ \mu(K_n) \ge n$. 
Then by Remark \ref{muVFmu}  $ \mu_V(K_n) = \mu(K_n) > n$ and so 
$ \sup \{ \mu_V (K) : \ K \se U,  K \in \kx\} = \infty = \mu_V(U)$.
Now assume $\mu_V(U) = \mu(U \cap V) < \infty $. Given $ \eps >0$ 
find $ K \in \kx$ such that $ K \se U \cap V$ and  $ \mu(U \cap V) - \mu(K) < \eps$.  
Then $ \mu_V(K) = \mu(K) > \mu(U \cap V) -\eps = \mu_V(U) - \eps$, and 
the inner compact regularity follows. 
Finite additivity of $ \mu_V$ on open sets follows from the same property of $\mu$. 
By Lemma \ref{KOfiniteaddDTM} $\mu_V$ is finitely additive on compact sets. 
Hence, $\mu_V$ is a deficient topological measure.
\end{proof}

\begin{lemma} \label{muFC}
Let $X$ be locally compact,  $\mu$ be a deficient topological measure on $X$, and $F \in \cx$. Then for any $C  \in \kx$  
\begin{align} \label{muFCinf}
\mu(F \cap C) = \inf \{ \mu_V (C) : \  F \se V, \ V \in \ox \}, 
\end{align}
where $\mu_V$ is a deficient topological measure on $X$ from Theorem \ref{restUdtm}.
If $ F \in \kx$ then (\ref{muFCinf}) holds for any $C \in \cx$.
\end{lemma}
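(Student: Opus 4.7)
The plan is to prove the two inequalities separately. The direction $\mu(F \cap C) \le \inf\{\mu_V(C) : F \se V, V \in \ox\}$ is immediate from monotonicity: for any open $V \supseteq F$ and any open $U \supseteq C$ we have $F \cap C \se V \cap U$, so $\mu(F \cap C) \le \mu(V \cap U) = \mu_V(U)$. Taking the infimum over such $U$ (via outer regularity of $\mu_V$ at $C$, which is valid whether $C \in \kx$ or $C \in \cx$) gives $\mu(F \cap C) \le \mu_V(C)$, and then the infimum over $V$.

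For the reverse inequality, note that $F \cap C$ is compact in both settings, being a closed subset of a compact set, so by outer regularity of $\mu$ we have $\mu(F \cap C) = \inf\{\mu(W) : F \cap C \se W,\ W \in \ox\}$. We may assume $\mu(F \cap C) < \infty$ (otherwise the claim is vacuous). Fix $\eps > 0$ and choose $W \in \ox$ with $F \cap C \se W$ and $\mu(W) < \mu(F \cap C) + \eps$. The task reduces to constructing $V \in \ox$ with $F \se V$ and $U \in \ox$ with $C \se U$ such that $V \cap U \se W$, for then $\mu_V(C) \le \mu(V \cap U) \le \mu(W) < \mu(F \cap C) + \eps$, and letting $\eps \to 0$ finishes the argument.

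The key construction invokes the fact that in a locally compact Hausdorff space, a compact set and a disjoint closed set can be separated by disjoint open sets. In the first case ($F \in \cx$, $C \in \kx$), the set $C \sm W$ is compact and disjoint from the closed set $F$; pick disjoint open sets $V \supseteq F$ and $U' \supseteq C \sm W$, and set $U = U' \cup W$. Then $C \se U$ (the portion of $C$ inside $W$ is absorbed into $W$), and $V \cap U = (V \cap U') \cup (V \cap W) = V \cap W \se W$. In the second case ($F \in \kx$, $C \in \cx$) the roles swap: $F \sm W$ is compact and disjoint from the closed set $C$, so we separate by disjoint open $V' \supseteq F \sm W$ and $U \supseteq C$, and take $V = V' \cup W$; the same calculation yields $V \cap U \se W$.

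The main obstacle, though fairly standard, is arranging the separation so that both cases fit a single construction. The trick is to attach $W$ to whichever open set must contain the non-compact closed piece ($F$ or $C$), which forces the intersection with the other open set, disjoint from the separating neighborhood of the compact remainder, to lie inside $W$.
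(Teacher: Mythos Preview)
Your proof is correct and follows essentially the same route as the paper: establish the easy inequality by monotonicity and outer regularity of $\mu_V$, then for the reverse inequality pick $W \supseteq F \cap C$ with $\mu(W) < \mu(F\cap C) + \eps$, separate the compact remainder from the other closed piece, and attach $W$ to one of the open sets so that $V \cap U \se W$. The only cosmetic difference is that the paper handles both cases with a single separation---it always separates $C$ from $F \sm W$ (one of which is compact in each case) and always sets $V = W \cup U_1$---whereas you swap roles in Case~1 and separate $C \sm W$ from $F$; both versions yield $V \cap U \se W$ by the same one-line computation.
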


\begin{proof}
Let $ F  \se V, C \se U$, where $ U , V \in \ox$. 
Then $\mu(F \cap C) \le \mu(V \cap U) = \mu_V(U)$, 
so $ \mu(F \cap C) \le   \inf \{\mu_V(U) : C \se U, \ U \in \ox\} = \mu_V(C)  $. 
The equality is easy to see if $ \mu(F \cap C) = \infty$.

Now suppose that $\mu(F \cap C) < \infty$. Given $\eps >0$, 
let $ W \in \ox$ be such that $ F \cap C \se W$ and $ \mu(W) - \mu(F \cap C) < \eps$.
Since $ C \cap (F \sm W) = \O$, whether $F \in \cx, C \in \kx$ or $ F \in \kx, C \in \cx$,  
by complete regularity of $X$ we may choose disjoint open sets  
$ U, U_1 \in \ox$ such that $ C \se U$ and  $F \sm W \se U_1$. 
Let $V = W \cup U_1$. Then $F \se V$ and $ V \cap U \se  W \cap U  \se W$.
Now 
\[ \mu_V(C) - \mu(F \cap C) \le \mu_V(U) - \mu(F \cap C) \le \mu(W) - \mu(F \cap C) < \eps, \]
and the statement follows.
\end{proof}

\begin{theorem} \label{DTMmuF}
Let  $X$ be locally compact, and let $F \se \cx$. 
There exists a deficient topological measure $\mu_F$ on $X$ such that 
\[ \mu_F ( K )  = \mu(F \cap K) \ \ \mbox{for} \ \  K \in \kx, \]
\[ \mu_F(U) = \sup \{ \mu_F(K) : \ K \se U , \ K \in \kx \} \ \  \mbox{for}  \ \ U \in \ox. \]
If $F \in \kx$ then $\mu_F(C) = \mu(F \cap C)$ for every $ C  \in \cx$.
\end{theorem}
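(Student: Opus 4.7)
The plan is to apply Proposition \ref{reg4ext} to the nonnegative set function $\la:\kx\to[0,\infty]$ defined by $\la(K)=\mu(F\cap K)$. Since $F$ is closed and $K$ is compact, $F\cap K$ is compact, so $\la$ is well defined. Finite additivity of $\la$ on $\kx$ is immediate from finite additivity of $\mu$ on compact sets: for disjoint $K_1,K_2\in\kx$ the sets $F\cap K_1$ and $F\cap K_2$ are disjoint compacts with union $F\cap(K_1\sqcup K_2)$.

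The main step is verifying the regularity condition \eqref{regusl} for $\la$. Given $C\in\kx$ with $\la(C)=\mu(F\cap C)<\infty$ and $\eps>0$, I would invoke Lemma \ref{muFC} (which gives $\mu(F\cap C)=\inf\{\mu_V(C):F\se V,\,V\in\ox\}$) to choose an open set $V\supseteq F$ with
\[
\mu_V(C)<\mu(F\cap C)+\eps/2.
\]
By Theorem \ref{restUdtm}, $\mu_V$ is itself a deficient topological measure, so by Proposition \ref{reg4ext} applied to $\mu_V$ (necessity direction), there is an open set $U\supseteq C$ with $\mu_V(K)\le \mu_V(C)+\eps/2$ for every compact $K\se U$. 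Now Lemma \ref{muFC}, applied again to $F$ and any such compact $K$, gives $\mu(F\cap K)\le\mu_V(K)$, and therefore
\[
\la(K)=\mu(F\cap K)\le \mu_V(K)\le\mu_V(C)+\eps/2\le\la(C)+\eps,
\]
which is precisely \eqref{regusl}. Proposition \ref{reg4ext} then yields the unique extension $\mu_F=\la^+$, and the asserted inner formula for $\mu_F(U)$ is exactly the definition of $\la^+$ on open sets.

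For the final assertion when $F\in\kx$, I would prove $\mu_F(C)=\mu(F\cap C)$ for closed $C$ by two inequalities. For the lower bound, note that $F\cap C\in\kx$ lies in every open $U\supseteq C$, so inner regularity of $\mu_F$ gives $\mu_F(U)\ge \mu_F(F\cap C)=\mu(F\cap(F\cap C))=\mu(F\cap C)$; taking infimum over such $U$ yields $\mu_F(C)\ge\mu(F\cap C)$. For the upper bound (assuming $\mu(F\cap C)<\infty$; otherwise there is nothing to prove), I would use the second statement of Lemma \ref{muFC} (valid since $F\in\kx$, $C\in\cx$) to pick $V\supseteq F$ with $\mu_V(C)<\mu(F\cap C)+\eps/2$, then use outer regularity of $\mu_V$ to pick open $U\supseteq C$ with $\mu_V(U)<\mu(F\cap C)+\eps$. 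The same application of Lemma \ref{muFC} gives $\mu_F(K)=\mu(F\cap K)\le\mu_V(K)\le\mu_V(U)$ for every compact $K\se U$, so $\mu_F(U)\le\mu_V(U)<\mu(F\cap C)+\eps$, and outer regularity of $\mu_F$ yields $\mu_F(C)\le\mu(F\cap C)$.

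The expected obstacle is the regularity step: the hypothesis supplies no direct outer approximation for $\la$, and one has to manufacture the required open neighborhood of $C$ from the regularity of an auxiliary object. The device that makes it work is the sandwich $\mu(F\cap\cdot)\le\mu_V(\cdot)$ from Lemma \ref{muFC}, combined with the fact that the $\mu_V$ are genuine deficient topological measures (Theorem \ref{restUdtm}), so their regularity transfers to $\la$. The same sandwich powers both directions of the compact-$F$ refinement.
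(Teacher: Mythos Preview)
Your proof is correct, and for the final assertion (compact $F$) it matches the paper's argument essentially line for line: both directions use Lemma~\ref{muFC} together with the inequality $\mu_F(K)=\mu(F\cap K)\le\mu_V(K)$, and the paper likewise passes through $\mu_V(C)=\inf\{\mu_V(U):C\se U\}$.

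For the existence part, however, you take a genuinely different route. The paper builds $\mu_F$ structurally, by first restricting $\mu$ to the closed subspace $F$ via Proposition~\ref{restrDTM} and then pushing the result back to $X$ via Proposition~\ref{extenDTM}; the identity $\mu_F(K)=\mu(F\cap K)$ then falls out immediately and no regularity estimate is needed. You instead define $\la(K)=\mu(F\cap K)$ on $\kx$ and verify the hypothesis~\eqref{regusl} of Proposition~\ref{reg4ext} by hand, manufacturing the required open neighborhood of $C$ from the auxiliary deficient topological measures $\mu_V$ of Theorem~\ref{restUdtm} and the sandwich $\mu(F\cap K)\le\mu_V(K)$ from Lemma~\ref{muFC}. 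Your approach is more self-contained in that it avoids the restriction/extension machinery, but it leans more heavily on Lemma~\ref{muFC} and Theorem~\ref{restUdtm}; the paper's approach is shorter and shows that $\mu_F$ is literally a composition of two prior constructions.
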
 

\begin{proof}
The existence of required $\mu_F$ can be proved, for example, by applying Proposition \ref{extenDTM} to a deficient topological 
measure on $F$ given by Proposition \ref{restrDTM}. We shall show now that if $F$ is compact, 
then $\mu_F(C) = \mu(F \cap C)$ for every 
closed set $C$. We need to check that $\mu(F \cap C) = \inf \{ \mu_F(U) : \ C \se U , \ U \in \ox \}$.
For any open set $U$ such that $ C \se U$ we have 
\[ \mu(F \cap C) = \mu_F(F \cap C) \le \mu_F(U), \]
so 
$$\mu(F \cap C) \le \inf \{ \mu_F(U) : \ C \se U , \ U \in \ox \}.$$
It is enough to show the opposite inequality for the case $\mu(F \cap C) < \infty$. 
By Lemma \ref{muFC} $ \mu(F \cap C)= \inf \{ \mu_V (C) : \  F \se V, \ V \in \ox \}$. Given $ \eps >0$ 
choose $V \in \ox$ such that $ F \se V$ and $ \mu_V(C) - \mu(F \cap C) <\eps$.
Note that for any $U \in \ox$ with $C \se U$ we have $\mu_F(U) \le \mu_V(U)$. Then
using Definition \ref{dtRestrU}
\begin{align*}
\mu(F \cap C) + \eps &> \mu_V(C)  = \inf \{ \mu_V(U) :  \  C \se U \ U \in \ox, \} \\
& \ge \inf \{ \mu_F(U) : \  C \se U \ U \in \ox, \}. 
\end{align*}
This gives
$$\mu(F \cap C) \ge \inf \{ \mu_F(U) : \ C \se U , \ U \in \ox \},$$ 
and the proof is complete.
\end{proof}

\begin{remark}
When $X$ is compact, topological measures (deficient topological measures) that are restrictions of topological measures (deficient topological measures)  
to sets appeared in several papers, including  \cite{GrubbLaberge}, \cite{Laberge}, \cite{Svistula:DTM}, \cite{Svistula:Integrals}.
Proposition \ref{restrDTM}, Theorem \ref{restUdtm}, and Lemma \ref{muFC} are generalizations to a locally compact case 
of Proposition 3 in \cite{Svistula:DTM}, Proposition 5.1 in \cite{Svistula:Integrals}, and the stated without proof part (4) of Proposition 5.2 in \cite{Svistula:Integrals}.
\end{remark}  

\section{Examples of deficient topological measures} \label{Examples}

Proposition \ref{lapldtm} allows us to build examples of deficient topological measures from simple set functions. 
Results in section \ref{SnewDTM} allow us to obtain new deficient topological measures from existing ones. 
We will  now present some concrete examples.  

\begin{example} \label{Svconset}
Let $X$ be locally compact, and let $D$ be a connected compact subset of $X$. Define a set function 
$\nu$ on $\ox \cup \cx$  by setting $\nu(A) = 1$ if $ D \se A$ and $\nu(A) = 0$ otherwise, for any
$A \in \ox \cup \cx$. If $D \se C \sc K$, where $C, K \in \kx$ then by connectedness either $D \se C$ 
or $ D \se K$, and this implies finite additivity  of $\la$ on $\kx$. It is also easy to check the inner and outer 
regularity of $\nu$: for example, if $ F \in \cx,  \ \nu(F) = 0$, then there exists a point $ x \in D \sm F$, and 
taking $U = X \sm \{x\}$ we see that $ F \se U,  \ \nu(U) = 0$,  so outer regularity is satisfied for $F$.
By Definition \ref{DTM}, $\nu$ is a deficient topological measure. Note that if $D$ is a singleton, then 
$\nu$ is a point mass. If $D$ has more than one element, then $\nu$ is a deficient topological measure, but not 
a topological measure: consider $ K = \{ x \}$, where $x \in D$, and $U = X \sm K$. Then 
$\nu(K) + \nu(U) = 0 \neq 1 = \nu(X)$, and by Definition \ref{TMLC} $\nu$ is not a topological measure.
\end{example}

\begin{example} \label{lanolDTM}
Let $\E$ be a finite family of disjoint bounded nonempty connected subsets of $X$. 
Let $\D$ be the family of closed subsets of $X$ 
that contain at least one set from $\E$, and $\la_{0} $ be some set function on $ \D$. 
For a closed set $F$ we set
%\begin{eqnarray} \label{lala0}
%  \la(F)  & = &
%  \left\{
%  \begin{array}{rl}
%  \sum_{i=1}^n \la_{0}(F_i), & \mbox{where }  F_i \mbox{ are components of } F \mbox{ that belong  to } \D \\
%  0 , & \mbox{if no component of } F  \mbox{  belong to }  \D
%  \end{array}
%  \right.
%\end{eqnarray}

\begin{align} \label{lala0}
  \la(F)  & = 
  \left\{
  \begin{array}{rl}
   \sum_{i=1}^n \la_{0}(F_i), & \mbox{where }  F_i \mbox{ are components of } F \mbox{ that belong  to } \D \\
  0 , & \mbox{if no component of } F  \mbox{  belong to }  \D
  \end{array}
  \right.
\end{align}

 %\begin{align} \label{lala0}
%\la(F) = \sum_{i=1}^n \la_{0}(F_i), 
%\end{align}
%where $F_i$ are components of $F$ that belong  to $\D$; 
%if there are no components of $F$ that belong to $\D$, then  $ \la(F) = 0$. 
In particular, $ \la(\O) = 0$. 
The components of two disjoint compact sets give the components of their union. It follows that $\la$ is finitely 
additive on $\kx$. 
\begin{enumerate}[label=(\roman*),ref=(\roman*)]
\item \label{exA}
Suppose  $\la_{0}:  \D \longrightarrow [0, \infty]$.
By Proposition \ref{lapldtm} the function $\la$ generates deficient topological measure $\laplu$.
We claim that if each set in $\E$ has more than one point and $\la_{0} \neq 0$,  then  $\laplu$ is a 
deficient topological measure, but not a topological meausure. 
Indeed, taking a point from each  set in $\E$ we obtain a compact set $C$.  
It is easy to check that $\laplu (C) = \laplu(X \sm C) = 0$, but $ \laplu(X) > 0$. By Theorem \ref{DTMtoTM} 
$\laplu$ is not a topological measure.   

Note that if we repeat the same construction, starting with $\E$  as a finite family of disjoint unbounded connected subsets of $X$,
then we obtain $\la = 0$ on $ \kx$, and then $ \laplu = 0$. 

\item \label{exB}
Suppose  $\la_{0}:  \D \longrightarrow [-\infty, \infty]$ attains at most one of $ \infty, -\infty$. 
By Proposition \ref{lapldtm} the function $\la$ generates deficient topological measure $|\la |$.
As in part \ref{exA},  if each set in $\E$ has more than one point and $\la_{0} \neq 0$,  then  $| \la |$ is a 
deficient topological measure, but not a topological meausure. 
\end{enumerate}
\end{example}

We can say more about deficient topological measure $ \laplu$ in Example \ref{lanolDTM}, part \ref{exA} 
under additional assumptions on $\la_{0}$.   We first need two lemmas.

\begin{lemma} \label{ezle1}
Suppose $\{ E_i  \}$ is a finite family of subsets of $X, \ E \se X$ and $ E_i \not\subset E$ for each $i$. 
Then there is an open set $U$ such that  $ E \se U , \ \ E_i \not\subset  U$ for each $i$.
\end{lemma}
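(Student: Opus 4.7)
The plan is to produce the required open set by deleting a finite witness set of points. For each index $i$, the hypothesis $E_i\not\subset E$ furnishes some point $x_i\in E_i\setminus E$; I would simply choose one such $x_i$ for each $i$.

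Next I would set $F=\{x_1,\dots,x_n\}$, where $n$ is the (finite) size of the family. Because $X$ is locally compact (hence Hausdorff in the convention of this paper), singletons are closed, and a finite union of closed sets is closed, so $F\in\calC(X)$. Define
\[
U \;=\; X\setminus F,
\]
which is open. Each $x_i$ was chosen outside of $E$, so $E\cap F=\emptyset$, i.e.\ $E\subseteq U$. On the other hand, for each $i$ we have $x_i\in E_i$ but $x_i\notin U$ by construction, so $E_i\not\subset U$, as required.

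There is no real obstacle here: the statement is a purely point-set observation, and the only ingredient beyond elementary set theory is that finite sets are closed in a Hausdorff space, which is built into the standing assumption that $X$ is locally compact. The lemma will be used in the next results (where an open neighborhood of a set is needed that avoids containing any given finite family of sets), but for its own proof no regularity, compactness, or connectedness hypothesis is invoked.
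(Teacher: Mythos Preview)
Your argument is correct and is exactly the paper's proof: choose one point in each $E_i\setminus E$, let $C$ be the finite set of these points, and take $U=X\setminus C$. The only difference is that you spell out why the finite set is closed, which the paper leaves implicit.
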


\begin{proof}
The proof as in \cite{Svistula:DTM} (Lemma 1):
construct a closed set $C$ by taking one point in each $E_i \sm E$. Take $ U = X \sm C$.
\end{proof}

\begin{lemma} \label{ezle2}
Let $X$ be locally compact. Assume that $K$ is a component of a compact set $C$  and $ K \se U, \ U \in \ox$. 
Then there are disjoint compact sets $E, D$ such that $E \sc D = C$ and $ K \se E \se U$.
\end{lemma}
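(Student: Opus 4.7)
\medskip

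\noindent\textbf{Proof proposal.} My plan is to work entirely inside the compact Hausdorff space $C$ (with the subspace topology) and exploit the classical fact that in a compact Hausdorff space, the connected component of a point coincides with its quasi-component, i.e., with the intersection of all relatively clopen sets containing it. Thus I would begin by writing
\[
 K = \bigcap \{V \subseteq C : V \text{ is clopen in } C \text{ and } K \subseteq V\},
\]
which holds because $K$ is a component of the compact Hausdorff space $C$.

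Next I would use the hypothesis $K \subseteq U$ to separate $K$ from $C \setminus U$. Since $C \setminus U$ is closed in $C$, it is compact, and it is disjoint from $K$. Rewriting the display above, the family $\{C \setminus V : V \text{ clopen in } C, K \subseteq V\}$ is an open cover (in $C$) of $C \setminus K$, and in particular of the compact set $C \setminus U$. Compactness then gives finitely many clopen sets $V_1, \ldots, V_n$, each containing $K$, such that
\[
 C \setminus U \subseteq (C \setminus V_1) \cup \cdots \cup (C \setminus V_n) = C \setminus (V_1 \cap \cdots \cap V_n).
\]
Setting $E := V_1 \cap \cdots \cap V_n$, the set $E$ is clopen in $C$, contains $K$, and is contained in $U$.

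Finally I would define $D := C \setminus E$ and verify the conclusions. Both $E$ and $D$ are closed in the compact set $C$, hence compact; they are disjoint with $E \sqcup D = C$ by construction; and $K \subseteq E \subseteq U$ has already been secured. No appeal to local compactness of $X$ is actually needed beyond its role in ensuring we are in a Hausdorff setting where components-equal-quasi-components applies to $C$. The only point requiring a little care is the invocation of the components/quasi-components identification in compact Hausdorff spaces; if a reference is preferred, I would cite the standard result (e.g.\ Engelking, 6.1.23). This is the only nontrivial step; everything else is a direct compactness argument.
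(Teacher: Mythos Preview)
Your proposal is correct and follows essentially the same approach as the paper: work in the compact Hausdorff subspace $C$, invoke the components-equal-quasi-components theorem (the paper cites the same reference, Engelking 6.1.23), and then use a compactness argument to find a clopen $E$ with $K\subseteq E\subseteq U$, setting $D=C\setminus E$. The only difference is cosmetic---you spell out the finite-subcover step explicitly, whereas the paper simply appeals to Engelking 3.1.5 for that step.
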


\begin{proof}
We consider $C$ as a subspace of $X$. $C$ is compact, and we know (see, for example,
\cite{Engelking}, paragraph 6.1.23) that its component $K$ is a quasi-component, i.e. 
the intersection of all clopen subsets of $C$ containing $K$. Using, for example, paragraph 3.1.5 in \cite{Engelking}, 
it is easy to obtain a clopen subset $E$ of $C$ such that $K \se E \se U$.  Take $D = C \sm E$.
\end{proof}   
 
\begin{proposition} \label{extoflan}
Let $X$ be locally compact. Suppose $\la_{0}$ is a nonnegative set function on $\kx \cup \ox$, which is
monotone on $\ox$,  finitely additive on $\kx$, and outer regular on $ \kx$. 
Let $\la$ be a set function on $\kx$ defined as in (\ref{lala0}) in Example \ref{lanolDTM} 
for some finite family $\E$ of disjoint bounded nonempty connected subsets of $X$ 
and $\laplu$ defined as in Definition \ref{laplu}. Then 
$\laplu(K) = \la(K) $ for all $ K \in \kx$. Thus, if $X$ is compact, then $\la (C) = \laplu(C)$ for all $ C \in \cx$.  
\end{proposition}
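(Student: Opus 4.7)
The plan is to verify the regularity condition~(\ref{regusl}) of Proposition \ref{reg4ext} for $\la$. Combined with Proposition \ref{lapldtm}, this forces $\laplu$ to be the unique deficient topological measure extending $\la$ from $\kx$, hence $\laplu(K)=\la(K)$ for every $K\in\kx$. The reverse inequality $\la(K)\le\laplu(K)$ is already in part~\ref{s5} of Lemma \ref{lapluBasic}. The second assertion, for compact $X$, is then immediate since $\kx=\cx$.

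Fix $C\in\kx$ with $\la(C)<\infty$ and $\eps>0$. Let $C_1,\ldots,C_n$ be the components of $C$ in $\D$, so that $\la(C)=\sum_{i=1}^n\la_0(C_i)$ with each term finite, and let $R$ denote the union of the remaining components of $C$. The goal is to construct an open neighbourhood $U\supseteq C$ of the form $U=W\cup V_1\cup\cdots\cup V_n$, where $W,V_1,\ldots,V_n$ are pairwise disjoint open sets satisfying (a) $C_i\se V_i$ and $\la_0(V_i)\le\la_0(C_i)+\eps/n$, (b) $R\se W$, and (c) no $E_k\in\E$ lies in $W$, and $E_k\se V_i$ only when $E_k\se C_i$.

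Such $U$ is built in four stages. First, outer regularity of $\la_0$ on $\kx$ supplies opens $V_i^{(0)}\supseteq C_i$ with $\la_0(V_i^{(0)})\le\la_0(C_i)+\eps/n$, and after intersecting with pairwise disjoint open neighbourhoods of the $C_i$'s (Hausdorff separation of finitely many disjoint compact sets) we may assume the $V_i^{(0)}$'s themselves are pairwise disjoint. Second, iterating Lemma \ref{ezle2} produces a disjoint compact decomposition $C=A_1\sc\cdots\sc A_n\sc R$ with $C_i\se A_i\se V_i^{(0)}$, where at each stage $C_i$ remains a component of the running residual because the $V_j^{(0)}$'s are disjoint. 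Third, the pairwise disjoint compact sets $A_1,\ldots,A_n,R$ of the locally compact Hausdorff space $X$ admit pairwise disjoint open neighbourhoods, which I intersect with the $V_i^{(0)}$'s to obtain preliminary $V_i$ and $W$ satisfying~(a) and~(b); monotonicity of $\la_0$ on $\ox$ preserves the volume bound. Fourth, Lemma \ref{ezle1} applied to each $A_i$ with the family $\{E_k:E_k\not\se C_i\}$, and to $R$ with all of $\E$, yields opens excluding the unwanted $E_k$'s, which a final intersection absorbs into $V_i$ and $W$ to achieve~(c); the exclusion is legitimate since $E_k\se A_i$ forces $E_k\se C_i$ (each $E_k\se C$ lies in a unique component of $C$, necessarily some $C_j$, and $A_i\cap C_j=\emptyset$ for $j\ne i$), and $E_k\se R$ is impossible for the same reason.

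Given such $U$, any compact $K\se U$ splits as $K=(K\cap W)\sc(K\cap V_1)\sc\cdots\sc(K\cap V_n)$ into pairwise disjoint clopen-in-$K$ pieces, so each component of $K$ lies in exactly one piece. By~(c) no component of $K$ inside $W$ lies in $\D$, and a component inside $V_i$ lying in $\D$ must contain some $E_k\se C_i$; connectedness of the $E_k$'s forces distinct components to absorb distinct $E_k$'s, so there are only finitely many such components inside any $V_i$ (at most $|\E|$). Finite additivity of $\la_0$ on $\kx$ together with outer regularity of $\la_0$ on $\kx$ (which gives $\la_0(K')\le\la_0(V)$ whenever compact $K'\se V\in\ox$) bounds the total $\la_0$-mass of these components by $\la_0(V_i)\le\la_0(C_i)+\eps/n$; summing over $i$ yields $\la(K)\le\la(C)+\eps$, which is~(\ref{regusl}). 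The main obstacle is the bookkeeping in this staged construction: each of the four properties is individually available from Lemmas \ref{ezle1}--\ref{ezle2} and the hypotheses on $\la_0$, but the successive shrinkings must be ordered carefully so that earlier gains survive.
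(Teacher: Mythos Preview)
Your proof is correct and follows essentially the same approach as the paper: verify condition~(\ref{regusl}) of Proposition~\ref{reg4ext} by constructing a suitable open neighbourhood of $C$ via Lemmas~\ref{ezle1} and~\ref{ezle2}, then estimate $\la(K)$ using finite additivity and outer regularity of $\la_0$. The only cosmetic differences are that the paper applies Lemma~\ref{ezle2} once per component and intersects the residuals (setting $D=\bigcap_i D_i$) rather than iterating, and it does not impose the second half of your condition~(c), which is harmless but unnecessary for the final bound; note also that the residual you obtain from the iteration need not equal your originally defined $R$ (some components outside $\D$ may be absorbed into the $A_i$), though this does not affect the argument.
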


\begin{proof}
The result follows from Proposition \ref{reg4ext}  applied to $\la$, if we check condition (\ref{regusl}).
Let $C$ be compact, $ \la(C) < \infty$. If $C$  contains no sets from $\E$, 
then by Lemma \ref{ezle1} there exists an open set $U$ 
such that $C \se U$ and $U$ does not contain elements of $\E$. Then $\la(C) = 0= \la(K)$ for any compact $K \se U$, 
and $\la$ satisfies condition (\ref{regusl}). 
Now assume that $C$ contains sets from $\E$. Let $\eps >0$. 
Recall that $\la(C) = \sum_{i=1}^n \la_{0}(C_i)$ where $C_i$ are components of $C$ that 
contain sets from $\E$. 
Since the sets $C_i$ s are disjoint and $\la_{0}$ is monotone on open sets and outer regular, we may find disjoint
open sets $U_i, \ i =1, \ldots, n$ such that $C_i \se U_i$ and $ \la_{0}(U_i) < \la_{0}(C_i) + \frac1n$. By Lemma \ref{ezle2}
for each $i$ find disjoint compact sets $E_i, D_i$ such that $E_i \sc D_i = C$, and $C_i \se E_i \se U_i$. 
Let $D = \bigcap_{i=1}^n D_i$. Then $D$ is compact, $E_i \cap D = \O$ for each $i$, 
and $C= \bigcap_{i=1}^n (E_i \sc D_i) = D \sc E_1 \sc \ldots \sc E_n$.
Since $D, E_1, \ldots, E_n$ are disjoint, choose disjoint open sets $V, V_1, \ldots, V_n$ such that 
$D \se V$ and $ E_i \se V_i \se U_i$ for $i=1, \ldots, n$. 
Note that $D \cap E_i = \O$ implies $D \cap C_i = \O$ for each $i$, so 
$D$, being a subset of $C$, does not contain any element from $\E$. By Lemma \ref{ezle1} we may assume that $V$ 
also does not contain elements of $\E$.  Let $W = V \sc V_1 \sc \ldots \sc V_n$. We shall show that $W$ is the set needed 
in condition (\ref{regusl}). So let $K \se W$ be compact, $K_1, \ldots, K_m$ be its components that 
contain sets from $\E$.
By connectedness each $K_j \se V_i$ for some $i$. Set $\Gamma_i = \{ j : K_j \se V_i\}$ for $i=1, \ldots, n$, 
and consider nonempty $\Gamma_i$. 
By outer regularity and finite additivity of $\la_{0}$  on $\kx$ we have: $\la_{0}( \bsc_{j \in \Gamma_i} K_j ) \le \la_0(V_i)$.
Now 
\begin{align*}
\la(K) &= \sum_{j=1}^m  \la_{0} (K_j) = \sum_{i:  \Gamma_i \neq \O} \la_{0} ( \bsc_{j \in \Gamma_i} K_j) \le \sum_{i=1}^n \la_0(V_i) \\
&\le \sum_{i=1}^n \la_0(U_i) < \sum_{i=1}^n \la_0(C_i) + \eps = \la(C) + \eps.
\end{align*}
Thus, condition (\ref{regusl})  is satisfied for $\la$, and this finishes the proof.
\end{proof}

\begin{remark}
Lemma \ref{ezle2} and Proposition \ref{extoflan} are generalizations of Lemma 2 and 
part of an argument on p. 733 in \cite{Svistula:DTM}. 
\end{remark} 

\begin{example}
Let $\la_{0}$ be a deficient topological measure. 
Let $K \in \kx$ and let $ \D = \{K\}$.  We apply Proposition \ref{extoflan}. For a compact $C$ disjoint from $K$ we see that
$\la(C) = 0$, whether or not $ \la_0 (C) = 0$. Thus, in general $ \laplu \neq \la_0$. 
\end{example}

\begin{example} \label{1ptRnDTM}
Let  $\la_{0}$ be a non-trivial deficient topological measure on a locally compact  space $X$. 
We may have $\la_{0} (X) < \infty $ or $\la_{0} (X) = \infty $.  Suppose there is a point $z$ in $X$ for which $\la_{0}(\{z\}) =0$. 
(For example, we may take $\la_{0}$ to be the Lebesque measure or a point mass $\delta_y$ at $y \ne z$ on $\r^n$.
We may also take $\la_{0}$ to be any topological measure from the last section in \cite{Butler:TMLCconstr}.)
Let the family $\E$ consist of one set, $ K = \{ z\}$.
Defined as in Example \ref{lanolDTM} the set function $ \la$ generates a deficient topological measure $\laplu$, and by 
Proposition \ref{extoflan} $\laplu = \la$ on $\kx$.
Then $\laplu(K)  = \la(K) = \la_{0} (K) = 0$. For any compact $C \se X \sm K$ we have $ \laplu(C) = \la(C) = 0$,
and then $ \laplu(X \sm K) = 0$. Since $\laplu(X) >0$, by Theorem \ref{DTMtoTM}, $\laplu$ is not a topological measure. 
Note that if, for instance,  $\la_{0}$ is the Lebesque measure and $X = \r^n$, 
then the range of deficient topological measure $ \laplu$
is $[0, \infty]$. If $\la_0$ is a compact-finite topological measure, then $\laplu$ is also compact-finite. 
\end{example}

\begin{remark} 
Example \ref{1ptRnDTM} is easy to generalize to the case when $\E$ consists of one set, $ K = \{ z_1, \ldots, z_n \} \se X$ 
for which $\la_{0}(K) =0$. 
\end{remark}

\begin{example} \label{nptsRnDTM}
Suppose $\la_{0}$ is a deficient topological measure on a locally compact  space $X$. 
Let the family $\E= \{ \{z_1\},  \ldots, \{z_n \} \} $,  $ K = \{ z_1, \ldots, z_n \} \se X$, and  $\la_{0}(K) =0$. 
For example, we may take $\la_{0}$ to be a Lebesque measure or a point mass  $\delta_y$  at $ y \ne z_1, \ldots, z_n$ on $\r^n$,
or any topological measure from the last section in \cite{Butler:TMLCconstr}. 
The set function $ \la$ as in Example \ref{lanolDTM} generates a deficient topological measure $\laplu$, and by 
Proposition \ref{extoflan} $\laplu = \la$ on $\kx$. As in Example \ref{1ptRnDTM} we have
$\laplu(K)  =  \laplu(X \sm K) = 0$, while  $\laplu(X) >0$. By Theorem \ref{DTMtoTM}, $\laplu$ is not a topological measure. 
Again, the range of $\laplu$ could be $[0, \infty]$. 
%(for example, if  $\la_{0}$ is the Lebesque measure and $X = \r^n$.)
\end{example}

\begin{remark}
In Example \ref{1ptRnDTM}  and Example \ref{nptsRnDTM} $\laplu$ is not a topological measure 
even though no set in $\E$ contains more than 1 point. Compare to Example \ref{lanolDTM}. 
\end{remark}

If $X$ is also locally connected we may strengthen Proposition \ref{extoflan} and specify how $ \laplu$ acts 
on compact and open sets.

\begin{proposition} \label{lanolgen}
Let $X$ be locally compact and locally connected. 
Suppose $\la_{0}$ is a nonnegative set function on $\kx \cup \ox$, which is
monotone on $\ox$,  finitely additive on $\kx$, and outer regular on $ \kx$.  
Suppose $\la$ is a set function on $\kx$ defined as in (\ref{lala0}) in Example \ref{lanolDTM}
for some finite family $\E$ of disjoint bounded nonempty connected subsets of $X$, 
and $\laplu$ is defined as in Definition \ref{laplu}. 
Then for $A \in \kx \cup \ox$
\[ \laplu(A) = \sum_{i=1}^n \la_{0} (A_i), \]
where $A_i$ are the components of $A$ containing at least one set from $\E$; if there are
no such components,  then $\laplu(A) = 0$.
\end{proposition}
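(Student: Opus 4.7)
The plan is to split into the cases $A \in \kx$ and $A \in \ox$. The compact case is immediate from Proposition \ref{extoflan}: it gives $\laplu(K) = \la(K)$ for every $K \in \kx$, and by the defining formula (\ref{lala0}), $\la(K) = \sum_i \la_0(K_i)$, where the sum runs over those components of $K$ that contain at least one set from $\E$.

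For $A = U \in \ox$, the first step is to decompose $U$ via its components. Local connectedness forces each component of $U$ to be open. Since $\E$ is finite and each connected $\E$-set lies in a unique component of $U$, only finitely many components $U_1,\dots,U_n$ of $U$ contain an $\E$-set; let $U_0$ be the union of the remaining components. By Proposition \ref{lapldtm}, $\laplu$ is a deficient topological measure and hence, by Lemma \ref{nunupl}, finitely additive on open sets, so
\[
\laplu(U) \;=\; \laplu(U_0) + \sum_{i=1}^{n}\laplu(U_i).
\]
Next I would check that $\laplu(U_0) = 0$: for any compact $K \se U_0$, every component of $K$ lies in some component of $U_0$, and since an $\E$-set is connected it cannot be contained in a component of $K$ sitting inside a component of $U_0$ with no $\E$-set; hence $\la(K)=0$, and inner compact regularity of $\laplu$ gives $\laplu(U_0)=0$.

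It remains to prove $\laplu(U_i) = \la_0(U_i)$ for each $i\ge 1$. The upper bound is clean: for compact $K \se U_i$, let $K_j$ denote those components of $K$ that contain an $\E$-set; these are disjoint compact subsets of $U_i$, so finite additivity of $\la_0$ on $\kx$ yields $\la(K) = \la_0\bigl(\bigsqcup_j K_j\bigr)$, and outer regularity of $\la_0$ on $\kx$ combined with $\bigsqcup_j K_j \se U_i$ gives $\la_0\bigl(\bigsqcup_j K_j\bigr) \le \la_0(U_i)$; taking the supremum over $K$ yields $\laplu(U_i) \le \la_0(U_i)$.

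The main obstacle is the reverse inequality $\laplu(U_i) \ge \la_0(U_i)$. The strategy I would pursue is to approximate $\la_0(U_i)$ from below by $\la_0(C)$ for compact $C \se U_i$ (using inner compact regularity of $\la_0$ on open sets, which is available in the intended applications, for instance when $\la_0$ is itself a deficient topological measure), and then to use that $U_i$ is connected open containing some $E_k \in \E$. Lemma \ref{LeConLC} allows enlarging any compact $C \se U_i$ to a compact connected $C'$ with $C \se C' \se U_i$; the delicate point is to carry out the enlargement so that $C'$ also contains $E_k$, so that the single component $C'$ lies in $\D$. This gives $\la(C') = \la_0(C')$, at least as large as $\la_0(C)$, delivering the lower bound on $\laplu(U_i)$.
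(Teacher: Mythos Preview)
Your approach is essentially the paper's: both dispose of the compact case via Proposition~\ref{extoflan}, and for open $U$ both bound $\laplu(U)$ above using outer regularity of $\la_0$ and below by invoking Lemma~\ref{LeConLC} to manufacture compact connected $K_i\se U_i$ containing the relevant $\E$-sets (your preliminary splitting $\laplu(U)=\laplu(U_0)+\sum_i\laplu(U_i)$ via finite additivity of $\laplu$ on open sets is only an organizational difference---the paper works with $U$ globally and groups components of a test compact $C\se U$ by the $U_i$ they land in). You are right to flag that inner compact regularity of $\la_0$ on open sets is needed for the lower bound; the paper's proof invokes it explicitly as well, even though it is not among the stated hypotheses.
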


\begin{proof}
For $A$  compact, the statement follows from Proposition \ref{extoflan}. Now let $U$ be open. If $U$ does not contain
any sets from $\E$ then for any compact $C \se U$ by Proposition \ref{extoflan} we have $\laplu(C) = \la(C) =0$, 
and hence, by inner regularity $\laplu(U) = 0$. Now assume that $U_i, i=1, \ldots, n$ are the components of $U$  that contain 
at least one set from $\E$. Let $C  \in \kx, C \se U$, and $C_1, \ldots, C_m$ be components of $C$ that   
contain at least one set from $\E$. By connectedness, each $C_j \se U_i$ for some $i$. 
Set $\Gamma_i = \{ j : C_j \se U_i\}$ for $i=1, \ldots, n$, and consider nonempty $\Gamma_i$. 
By outer regularity of $\la_0$ we have: $\la_{0}( \bsc_{j \in \Gamma_i} C_j ) \le \la_{0} (U_i)$. 
Then $\la(C) = \sum_{j=1}^m \la_{0}(C_j) \le \sum_{i=1}^n \la_{0}(U_i),$ which implies that
\[ \laplu(U) \le \sum_{i=1}^n \la_{0}(U_i).\] 
We shall show the equality. It is enough to consider the case $\la_{0} (U_i) < \infty$ for each $i$, 
for if $\la_{0} (U_i) = \infty$ for some $i$ then for each $n \in \N$ there is a compact $K_n \se U_i \se U$ such that
$\la(K_n) > n$, and then $\laplu(U) = \infty$, so equality holds.
Let $\eps>0$. Since  each $U_i$ is open connected, 
and $\la_{0}$ is inner regular and monotone,  
using Lemma \ref{LeConLC} we may find a compact connected set $K_i \se U_i$ such that it 
contains elements of $\E$ that are contained in $U_i$ and $ \la_{0}(K_i) > \la_{0}(U_i) - \frac{\eps}{n}$.
Then for compact $K =\bsc_{i=1}^n K_i$ we have: $K \se U$ and 
\[ \laplu(U) \ge \la(K) = \sum_{i=1}^n \la_{0} (K_i)  >   \sum_{i=1}^n \la_{0} (U_i) - \eps. \]
It follows that $ \laplu(U) = \sum_{i=1}^n \la_{0}(U_i)$.
\end{proof}

\begin{remark}
Examples \ref{Svconset}, part \ref{exA} of Example \ref{lanolDTM}, and Example \ref{1ptRnDTM} 
are generalizations of Example 1, Example 2, and
Example (c) on p. 733 in \cite{Svistula:DTM}.
\end{remark}

\begin{example}  \label{discrDTM} 
Let $J$ be a countable subset of $X$. Let $\nu$ be a finitely additive set function on $\calP (J)$ with $ \nu(\O) =0$ 
that does not assume both $\infty$ and
$-\infty$. Define a set function on $\kx$ as follows: if $K \cap J \neq \O$ then
\[ \la(K) = \sum_{i=1}^ \infty  \nu(K_i \cap J), \]
where $K$ is compact, and $K_i$ are components of $K$ that intersect $J$. If $K \cap J = \O$ then $\la(K) = 0$.
Again,  $\la$ is finitely additive on $\kx$. By Proposition \ref{lapldtm}, $\laplu$ is a deficient topological measure on $X$.
\begin{enumerate}[label=(\roman*),ref=(\roman*)]
\item  \label{firpa} 
Let $\nu(L) = |L| $ for a finite $L \in \calP (J)$,  and  $\nu(L) = \infty$ otherwise.  Then  $\laplu(A)$ counts how many
points from $J$ are contained in $A$.  
\item \label{secpa}
If $\E$ is a finite family of disjoint connected subsets of $X$ 
(as in Example \ref{lanolDTM}), $J$ is a set where we take one point from each set in $\E$, and 
$\nu(L) = |L|$ for $L \in \calP (J)$,  then  $\laplu(A)$ counts how many
sets from $\E$ are contained in $A$. 
\item \label{semifinDTM}
Let $X$ be $\r^2$  or $X = [-10, 10] \times   [-10, 10] $. Set $J = J_1 \cup J_2$, 
where $J_1$ is the set $ \{ (0, \frac 1n), \ n \in \N \}$ and 
$J_2$ is the set of points with all natural coordinates, and let $\nu$ be as in part \ref{firpa}. 
Then $\laplu (C) = \infty$ for any compact 
$C$ containing $J_1$,  and  $\laplu (C) < \infty$ for any compact 
$C$ not intersecting $J_1$. It is easy to see that $\laplu$ is not compact-finite, but is semifinite. 
\end{enumerate}
\end{example}  

\begin{remark}
Part \ref{secpa} in Example \ref{discrDTM} implies Example (a2) on p. 732 in \cite{Svistula:DTM}.
\end{remark}

{\bf{Acknowledgments}}:
This  work was conducted at the Department of Mathematics at the University of California Santa Barbara. 
The author would like to thank the department for its hospitality and supportive environment.
%%
%    Sample file biblio.tex
%

${   }$ \\
\noindent
Department of Mathematics \\
University of California Santa Barbara \\
552 University Rd.,  \\
Isla Vista, CA 93117, USA \\
e-mail: svbutler@ucsb.edu \\
%\input{biblio}

%Department of Mathematics \\
%University of Illinois at Urbana-Champaign \\
%273 Altgeld Hall, 1409 West Green Street \\
%Urbana, IL 61801, USA\\
%E-mail: svbutler@math.uiuc.edu

\end{document}